\definecolor{blue}{RGB}{57, 57, 235}
\definecolor{red}{RGB}{165, 0, 33}
\definecolor{green}{RGB}{57, 235, 57}
\newtheorem{theorem}{Theorem}[section]
\newtheorem{proposition}[theorem]{Proposition}
\newtheorem{example}[theorem]{Example}
\newtheorem{remark}[theorem]{Remark}
\numberwithin{equation}{section}
\definecolor{refblue}{RGB}{26,13,171}
\definecolor{newblue}{RGB}{13,13,217}
\definecolor{newred}{RGB}{221,13,13}
\def\b{\bm b}
\def\e{\bm e}
\def\dd{\mathrm d}
\def\u{\bm u}
\def\f{\bm f}
\def\v{\bm v}
\def\d{\bm d}
\def\q{\bm q}
\def\m{\bm m}
\def\x{\bm x}
\def\y{\bm y}
\def\A{\mathcal A}
\def\B{\mathcal B}
\def\L{\mathcal L}
\def\I{\mathcal I}
\def\D{\mathcal D}
\def\X{\mathcal X}
\def\F{\mathcal F}
\def\P{\mathcal P}
\def\W{\mathcal W}
\def\M{\mathcal M}
\def\T{\mathcal T}
\def\G{\mathcal G}
\def\z{\bm z}
\def\fvc{\mathbf V^{\mathrm c}}
\def\fvs{\mathbf V^{\mathrm {st}}}
\def\ftc{\mathbf T^{\mathrm c}}
\def\fts{\mathbf T^{\mathrm {st}}}
\newcommand{\eqdef}{:=}
\DeclareMathOperator*{\argmin}{arg \, min}
\DeclareMathOperator{\diag}{Diag}
\DeclareMathOperator{\rge}{rge}
\newcommand{\vertiii}[1]{{\left\vert\kern-0.25ex\left\vert\kern-0.25ex\left\vert #1
		\right\vert\kern-0.25ex\right\vert\kern-0.25ex\right\vert}}
	\title{\Large\bf An Efficient Augmented Lagrangian Framework for Dynamic Optimal Transport on Surfaces Based on Second-Order Cone Programming Reformulation\thanks{This work was supported by
the National Key R \& D Program of China (No. 2021YFA001300) and 
the National Natural Science Foundation of China (No. 12271150).}}
	\author{
		Liang Chen\thanks{School of Mathematics, Hunan University, Changsha, 410082, China
			(\url{chl@hnu.edu.cn}).}
		\quad
		Youyicun Lin\thanks{School of Mathematics, Hunan University, Changsha, 410082, China
			(\url{linyouyicun@hnu.edu.cn}).}
		\quad
		Yuxuan Zhou\thanks{Department of Mathematics, Southern University of Science and Technology, Shenzhen, 518055, China            (\url{zhouyx8@mail.sustech.edu.cn}).}}
	\date{\today}
\begin{document}
	\maketitle
	\begin{abstract}
		This paper proposes an efficient numerical optimization framework for solving dynamic optimal transport (DOT) problems on surfaces, computing both the quadratic Wasserstein distance and the associated interpolation.
		Building on the convex DOT model of Benamou-Brenier-Lisini, we first properly reformulate its dual problem, discretized on a triangular mesh in space and a staggered grid in time, into a linear second-order cone programming (SOCP) problem.
		Then the resulting  SOCP is solved via an inexact proximal augmented Lagrangian method with a highly efficient numerical implementation, and the algorithm is guaranteed to converge to a Karush-Kuhn-Tucker point without imposing any additional assumptions.
		Finally, we implement the proposed framework as an open-source software package.
		The effectiveness, robustness, and computational efficiency of the software are validated through extensive numerical experiments across diverse datasets, demonstrating that it consistently outperforms state-of-the-art surface DOT solvers by several times in speed, while the commercial solvers {\tt Gurobi} and {\tt MOSEK} either fail to solve the same SOCP reformulation due to out-of-memory or require substantially prolonged computation times.

\bigskip
\noindent
{\bf Keywords:}
dynamic optimal transport, two-dimensional Riemannian manifolds, augmented Lagrangian,  second-order cone programming, open-source software package

\medskip
\noindent
{\bf MSCcodes:}
49Q22, 65K10, 90C06, 90-04
\end{abstract}

	\section{Introduction}
	Optimal transport (OT) traces its origins to Monge's pioneering work \cite{monge} and has grown into a cornerstone of pure and applied mathematics, profoundly shaped by Kantorovich's relaxation \cite{Kantorovich42,Kantorovich48}.
	Generally, it finds the most cost-efficient transportation plan between two probability measures, yielding the well-known Wasserstein distance.
	In the seminal work \cite{bb00} of Benamou and Brenier, a continuum-mechanics perspective was developed that reformulated the Monge-Kantorovich OT problem for measures with compact support in the same Euclidean space under a quadratic cost as a kinetic energy minimization problem constrained by a continuity equation, commonly referred to as \emph{dynamic optimal transport} (DOT). 
	Later, Lisini \cite{lisini07} extended DOT to Borel measures with finite moment on separable Banach spaces, and to smooth finite-dimensional complete Riemannian manifolds. 
	As a refined OT model, DOT gives not only the cost but also a time-dependent displacement interpolation, capturing realistic transport phenomena (e.g., fluid dynamics, crowd movement) in which intermediate states are physically meaningful \cite{Santamot}.

	As OT and DOT are fundamental optimization problems with growing practical applications, their computational and numerical aspects have drawn considerable interest in recent years \cite{peyre}.
	To date, highly efficient numerical methods for OT have been achieved 
	(see \cite{sinkhorn,solomon,chambolleot,hot} for example).
	In contrast, solving the associated DOT problems remains a challenge, especially on complex non-Euclidean domains (see \Cref{relatedworkds} for details).
	To bridge this methodological gap, this paper aims to design an efficient numerical optimization framework, together with an open-source software package\footnote{\url{https://github.com/chlhnu/DOTs-SOCP}}, for solving DOT problems on smooth surfaces, i.e., compact and connected two-dimensional Riemannian manifolds (see \Cref{sec:mdot} for the detailed background and concrete formulation of the problem).

	\subsection{Related works}
	\label{relatedworkds}
	Numerical investigation of DOT in flat domains of $\mathbb{R}^n$ was initiated by Benamou and Brenier \cite{bb00}, and has since inspired many computational approaches.
	These include interior-point methods \cite{nata21}, primal-dual splitting methods \cite{papa14,nata22,jose22}, and the accelerated proximal gradient method \cite{yjj24}.
	A recent work \cite{dotsoc} reformulated the dual of this DOT problem on staggered finite-difference grids as a linear second-order cone programming (SOCP) problem \cite{monteiro,soc,socsurvey},  then solved via an inexact proximal augmented Lagrangian method \cite{cl21}, built on a series of foundational works \cite{hestenes,powell,rock-alm,lxd16,lxdsgs}.
	As demonstrated in \cite{dotsoc}, this approach maintains superior scalability and computational efficiency compared to previous methods.

	On surfaces, \cite{hugo18} discretized the dual DOT problem on triangular meshes and solved the resulting convex optimization by the alternating direction method of multipliers (ADMM) \cite{glowinski75,gabay76}.
	Compared with the convolution method in \cite{solomon}, it produces interpolation densities with less diffusion. 
	As concluded in \cite[Section 7]{hugo18}, accelerating their method as much as possible is a desirable research topic, since their computational efficiency for large-scale triangular meshes was unsatisfactory.
	More recently, the accelerated proximal gradient method was implemented in \cite{yjjmanifold} for mean-field games (including DOT). Still, it requires a positive lower bound for both the initial and target measures.
	According to \cite[Section 5]{yjjmanifold}, it is hard to achieve a relative error of $10^{-4}$ for the residual of the Karush–Kuhn–Tucker (KKT) system due to the large-scale discretization and the complicated mesh.
	Given these limitations, DOT on surfaces remains a difficult problem in computational optimization.
	
	\subsection{Motivation and obstacles}
	Motivated by these related works, especially the compelling numerical performance \cite{dotsoc} for DOT in flat domains, it is natural to consider extending it to DOT on surfaces. 
	For this purpose, we should take a further look at \cite{dotsoc}, in which the simple shifted copy operators \cite[Eq. (3.4)]{dotsoc} are sufficient to play a decisive role in decoupling the quadratic inequality constraints from the discretized problem \cite[Eq. (2.8)]{dotsoc}, thanks to the uniform grid spacing. 
	This leads to the crucial decoupled problem \cite[Eq. (3.7)]{dotsoc}, in which the quadratic constraints were further uniformly converted to involve the second-order cone in $\mathbb{R}^{4D+2}$ with $D$ being the dimension of the spatial domain.
	Then, for the resulting SOCP reformulation \cite[Eq. (3.10)]{dotsoc}, the inexact proximal ALM \cite[Algorithm 1]{dotsoc} succeeds because the subproblems are conquered via the following three steps:
	\textbf{(i)} a discrete Poisson equation tackled by fast Fourier transform with a log-linear complexity; 
	\textbf{(ii)} projections to second-order cones admitting a linear complexity, whose advantage over existing approaches (solving cubic equations) was clearly demonstrated in \cite[Table 3.1]{dotsoc}; 
	and 
	\textbf{(iii)} a large-scale linear system with a diagonal coefficient matrix, which is an essential theoretical result proved by \cite[Proposition 3.2]{dotsoc}. 
	Besides, the existence of KKT points for both the original discretized dual problem and the SOCP reformulation is established without any additional assumptions, and a KKT solution to the former can be extracted directly from one of the latter \cite[Propositions 3.1 \& 3.3]{dotsoc}. 
	These results are based on the properties of the discrete gradient and divergence operators, determined only by the mesh structure. 
	Overall, the efficacy and efficiency of \cite{dotsoc} can be viewed as a combination of convergence guarantee and computationally economical subproblems, all depending on the staggered grids. 
	
	To extend the SOCP reformulation and efficient implementation from \cite{dotsoc} to DOT problems on smooth surfaces discretized with triangular meshes, an essential issue arises due to the irregular nature of the mesh.
	Specifically, the quadratic constraints (see Eq. \eqref{eq:dsP}) associated with a mesh vertex $\v$ contain discretized gradient quantities indexed by the triangles incident to $\v$ (denoted by $T_{\v}$), and weighted by the corresponding triangle areas and the area of the barycentric dual cell of $\v$ (see \Cref{fig:mesh}). 
	For different vertices, both the number of incident triangles and the corresponding weights vary.
	This is fundamentally different from staggered finite-difference grids (see \cite[Eq. (2.8)]{dotsoc}), forcing shifted copy operators no longer valid for decoupling the quadratic constraints. 
	As a consequence, when these constraints are decoupled (see Eq. \eqref{eq:tildep}), the resulting quadratic inequalities are not about the original variables directly, but involve a composition with extra linear transforms. 
	Thus, casting the decoupled problem as an SOCP formulation similar to \cite[Eq. (3.10)]{dotsoc} is not possible due to this composition, and the algorithmic design in \cite{dotsoc}, which is heavily tailored to the decoupling operator (shifted copy) and the resulting SOCP reformulation (admitting very simple subproblems), is not guaranteed to be applicable or efficient here. 
	Besides, whether the unconditional existence of KKT points holds for the discretized problem and the SOCP reformulation, along with the theoretical interconnections between them, requires a different route for examination.
	
	\subsection{Contributions}
	We first introduce vertex-dependent linear operators (see Eq. \eqref{eq:tvkv}),
	adapted to triangular meshes, to decouple the quadratic inequality constraints in the discretized dual DOT problem on surfaces (given by \eqref{eq:conti-ot-pri}, discretized on a triangular mesh in space and a staggered grid in time).
	These decoupled inequalities are further rearranged into second-order cone constraints
	indexed by staggered time grids and mesh vertices, where the cone associated with each vertex $\v$ has dimension $2+6|T_{\v}|$.
	A distinctive property of the resulting SOCP reformulation \eqref{eq:opt-ddot-soc}, compared with that for DOT in flat domains (see \cite[Eq. (3.10)]{dotsoc}), is that the conic constraint here is imposed on a linear transform of the auxiliary variable (i.e., ${\cal T}\z$), rather than directly on the auxiliary variable $\z$.
	Although this feature hinders the direct application of the algorithm used in \cite{dotsoc} and inheriting its efficient implementation, we elaborately designed an economical implementation of the inexact proximal ALM from \cite{cl21} to efficiently solve it. 
	Extensive numerical experiments demonstrated that the proposed framework is far superior to both state-of-the-art surface DOT solvers and commercial SOCP solvers. 
	The main contributions of this work are categorized as follows:
	\paragraph{Theory} 
	We prove the unconditional existence of KKT points for both the original discretized problem and the SOCP reformulation, and reveal that every KKT point of the reformulation intrinsically yields a KKT point of the original discretized problem (\Cref{prop:dotkkt,prop:soc-kkt}). 
	These results provide the fundamental theoretical guarantee for the convergence and efficacy of algorithms for the reformulation.

	\paragraph{Algorithm} 
	We design a tailored inexact proximal ALM (\Cref{alg:socinpalm}) to solve the SOCP reformulation. 
	A major advantage is that we factorize the extra linear operator $\cal T$ in \eqref{eq:opt-ddot-soc} as the product of a permutation matrix and a nonsingular diagonal matrix. 
	Then, there is no need to introduce an extra slack variable to deal with $\T$, and the core projection subproblem is decomposed into closed-form projections onto second-order cones of varying dimensions.

	\paragraph{Implementation} 
	By the SOCP reformulation and its essential property established in \Cref{prop:diag}, the resulting two subproblems in the adopted ALM algorithm are large-scale linear systems of equations, but not hard to solve, in which one reduces to independent surface Poisson systems with fixed coefficient matrices factorized in advance, and the other one admits a diagonal coefficient matrix.

	\paragraph{Software} 
	We provide an open-source Python software package with fully reproducible numerical experiments. 
	According to the numerical results, the proposed framework exhibits superior accuracy, robustness, and efficiency compared to both existing surface DOT solvers \cite{hugo18,yjjmanifold} and commercial conic programming solvers ({\tt Gurobi} \cite{gurobi} and {\tt MOSEK} \cite{mosek}) applied to the same SOCP reformulation.
	Specifically, it reliably attains the prescribed tolerances and is at least $7$ times faster than the compared surface DOT solvers,
	while the commercial solvers either fail to solve the same SOCP reformulation or require substantially longer computation times.

	\subsection{Organization}
	The remaining parts of this paper are organized as follows.
	\Cref{sec:pre} introduces the notation and preliminaries, including the DOT problems on a smooth surface embedded in $\mathbb{R}^3$ and the dual problem (\Cref{sec:mdot}), together with the triangular finite-element discretization (\Cref{sec:fem}).
	\Cref{sec:anal} establishes essential properties of the discretized dual DOT problem that are crucial for reformulation and designing optimization algorithms.
	In \Cref{sec:soc}, we reformulate the discretized dual DOT problem into a linear SOCP problem and analyze its essential properties.
	In \Cref{sec:alg}, we introduce an inexact proximal ALM to solve the reformulation, which is guaranteed to converge to a primal-dual solution pair without imposing any additional assumptions, together with the details of the efficient numerical implementation.
	In \Cref{sec:num}, we use extensive numerical experiments to validate the superior efficiency of the proposed approach compared with existing software packages and commercial solvers.
	Finally, we conclude this paper in \Cref{sec:con}.

	\section{Notation and preliminaries}
	\label{sec:pre}
	Throughout this paper, 
	$\mathbb{R}^n$ represents the $n$-dimensional real Euclidean space, and  $\mathbb{R}_+^n$ ($\mathbb{R}_-^n$) represents the non-negative (non-positive) orthant in $\mathbb{R}^n$.
	For a finite set $\G$, we denote by $|\G|$ its cardinality.
	For the two given (column) vectors $\x$ and $\y$, we denote $(\x;\y) := (\x^{\top},\y^{\top})^{\top}$, where $(\cdot)^\top$ means the transpose.
	We use $\e_{k}\in \mathbb{R}^{n}$ to represent the $k$-th unit vector in $\mathbb{R}^n$.
	The two vectors $\bm{1}_n$ and $\bm{0}_n$ represent the vectors in $\mathbb{R}^n$ whose entries are $1$ and $0$, respectively.
	The notation $\bm{x}\ge 0$ ($\bm{x}\le 0$, etc.) means that all elements of $\bm{x}$ are non-negative (non-positive, etc.).
	Let $\mathbb{K}_{\mathrm{soc}}:=\{\y\in\mathbb{R}^{1+n}\mid y_0\geq \|\bar{\y}\|\}$ be the second-order cone in $\mathbb{R}^{1+n}$, in which for
	a given vector $\y\in\mathbb{R}^{1+n}$, we use $y_0$ to denote its first component and define $\bar{\y}\eqdef(y_1,\dots,y_n)^{\top}$. 
	For simplicity, we denote $\y = (y_0;\bar{\y})$.
	
	We use $\rge(A)$ and $\ker(A)$ as the range and the null space of a given matrix $A$.
	For any given matrices $A$ and $B$, $A\otimes B$ denotes their Kronecker product, 
	and $A\odot B$ denotes their Hadamard product, provided they share the same size.
	We use $\I_n$ to denote the $n\times n$ identity matrix and use $\diag(D_1,\ldots,D_n)$ to denote a block-diagonal matrix whose diagonal blocks are $D_i$,  ordered from $i=1$ to $i=n$.

	Given a two-dimensional Riemannian manifold $(\X,g)$, where $\X \subset\mathbb{R}^3$ is a smooth surface and $g$ denotes the induced Riemannian metric, we use $d_g(\cdot,\cdot)$ to denote the geodesic distance on $\X$. 
	Let $C(\X)$ be the space of continuous real-valued functions on $\X$ and denote the topological dual of $C(\X)$ by $\M(\X)$, which is identified with the space of finite signed Radon measures on $\X$. 
	We use $\M_+(\X)\subset \M(\X)$ to represent the set of all finite non‑negative Radon measures on $\X$, which is a closed convex cone in $\mathcal{M}(\X)$ with respect to both the total variation norm and the weak* topology (induced by $C(\X)$).
	For each $\x\in\X$, $T_{\x}\X$ denotes the tangent space at $\x$, equipped with the inner product $\langle\cdot,\cdot\rangle_{g(\x)}$ and the induced norm $\|\cdot\|_{g(\x)}$.
	Denote by $T\X$ the corresponding tangent bundle, and let $\pi: T\X \to \X$ be the canonical projection map.

	Set $\Omega:= [0,1] \times \X$ as the space-time cylinder equipped with the product topology and the corresponding Borel $\sigma$-algebra. 
	Let $C(\Omega; T\mathcal{X})$ be the space of continuous maps $\b : \Omega \to T\mathcal{X}$ such that for each $t \in [0,1]$, $\b(t, \cdot)$ is a section of $T\mathcal{X}$ (i.e., $\pi(\b(t, \boldsymbol{x})) = \boldsymbol{x}$), endowed with the supremum norm $\|\b\|_\infty = \sup_{(t,\x) \in \Omega} |\b(t,\x)|_g$, thereby structuring it as a Banach space.  
	In other words, for any given time $t$, the vector $\b(t,\x)$ is purely spatial and intrinsically tied to the tangent space at $\x$, while varying continuously with respect to both time $t$ and space $\x$. 
	By virtue of the induced Riemannian metric $g$, which canonically identifies the tangent and cotangent bundles, we naturally identify its continuous dual space by $\mathcal{M}(\Omega; T\X)$, which consists of 
	$T\mathcal{X}$-valued Radon measures on $\Omega$ with finite total variation.

	\subsection{DOT on smooth surfaces}
	\label{sec:mdot}
	
	Given measures $\rho_0,\rho_1 \in \M_+(\X)$ on $\X$ sharing the same total mass, the OT problem \cite{villaninew} is given by 
	\begin{equation}
		\label{eq:2-wass}
		\begin{array}{l}
			\displaystyle
			\min_{\gamma\in\Gamma(\rho_0,\rho_1)}\int_{\X\times \X} d_g(\x,\y)^2 \dd \gamma(\x,\y),
		\end{array}
	\end{equation}
	where $\Gamma(\rho_0,\rho_1)$ is the set of non-negative measures on $\X\times \X$ whose first and second marginals are $\rho_0$ and $\rho_1$, respectively. 
	According to \cite[Theorem 2.4]{hugo20}, the optimal value of the OT problem \eqref{eq:2-wass} is twice the optimal value of the following DOT problem
	\begin{equation}
		\label{eq:conti-ot-pri}
		\min\limits_{\rho\in\M(\Omega),
			\, 
			\m\in\M(\Omega;T\X)}
		\left\{\mathfrak{B}(\rho,\m)
		\ \Big\vert\ 
		\begin{array}{lr}
			\partial_t\rho + \mathrm{div}_{\x}(\m) = 0, 
			\ 
			&\rho(0,\cdot)=\rho_0,
			\\
			\langle \bm{m}, \bm{n} \rangle_{g(\boldsymbol{x})} = 0
			\text{ on } \partial\mathcal{X},
			\ 
			&\rho(1,\cdot)=\rho_1
		\end{array}
		\right\}.
	\end{equation}
	Here, the constraints in \eqref{eq:conti-ot-pri} are understood in the weak sense and $\boldsymbol{n}$ stands for the outward unit normal to $\partial\mathcal{X}$.
	%, i.e., 
	%\begin{equation*}
	%    \label{eq:conteq}
	%    \begin{array}{ll}
		%         \int_{\Omega} \partial_t\phi(t,\x)\dd \rho(t,\x)
		%+
		%\int_{\Omega}\langle \nabla_{\x}\phi(t,\x), \dd \m(t,\x)\rangle_{g(\x)}   \\[2mm]
		%   \hspace{10em}  =
		%\int_{\X} \phi(1,\x) \dd \rho_1(\x)
		%-
		%\int_{\X}\phi(0,\x) \dd  \rho_0(\x)
		%\quad 
		%\forall \phi\in C^1(\Omega),
		%    \end{array}
	%\end{equation*}
	%
	%The second integral in \eqref{eq:conteq} is the duality pairing between the vector-valued measure $\m \in \M(\Omega;T\X)$ and the continuous tangent vector field $\nabla_{\x}\phi \in C(\Omega;T\X)$.
	Meanwhile, the Benamou-Brenier functional $\mathfrak{B}\colon \M(\Omega)\times\M(\Omega; T\X)$ in \eqref{eq:conti-ot-pri} is defined by 
	\begin{equation*}
		\mathfrak{B}(\rho,\m)\eqdef\sup\limits_{a,\b}
		\Big\{\int_\Omega a(t,\x)\dd \rho(t,\x)+\int_\Omega\langle\b(t,\x),\dd\m(t,\x)\rangle_{g(\x)} \mid (a,\b)\in P
		\Big\},
	\end{equation*}
	where $P$ is the closed convex set defined by
	\begin{equation}
		\label{eq:calP}
		\begin{array}{ll}
			P\eqdef\Big\{(a,\b)\in C(\Omega)\times C(\Omega;T\X)\mid
			a(t,\x)+\frac{\|\b(t,\x)\|^2_{g(\x)}}{2}\leq 0 \quad \forall\, (t,\x)\in \Omega\Big\}.
		\end{array}
	\end{equation}
	By definition, the functional $\mathfrak{B}$ is nonnegative, convex, and weak-* lower semicontinuous, ensuring \eqref{eq:conti-ot-pri} is a well-defined linearly constrained convex optimization problem. 
	Moreover, $\mathfrak{B}(\rho,\m)<+\infty$ if and only if $\rho\in\M_+(\Omega)$ and there exists a $\rho$-measurable time-dependent tangent vector field $\bm r$, satisfying $\bm r(t,\x)\in T_{\x}\X$ for $\rho$-almost every $(t,\x) \in \Omega$, such that $\m = \bm r \rho$ and $\int_{\Omega}\|\bm{r}(t,\x)\|^2_{g(\x)}\dd\rho(t,\x)<+\infty$. 
	In this case, the DOT problem \eqref{eq:conti-ot-pri} minimizes the total kinetic energy over the continuous time interval $[0,1]$. 
	One may refer to \cite[Section 2.1]{hugo20} for the details and \cite[Proposition 5.18]{Santamot} for its Euclidean counterpart.
	 
	One of the most common numerical approaches is to apply ADMM \cite{glowinski75,gabay76} to the dual problem of \eqref{eq:conti-ot-pri}, as in \cite{bb00}.
	This approach was later extended in \cite{hugo18} to two-dimensional manifolds in $\mathbb{R}^3$.
	Following the discussion in \cite{hugo18} and \cite[Section 6]{Santamot}, the dual problem of \eqref{eq:conti-ot-pri} is given by
	\begin{equation}
		\label{eq:cont-dual-dot}
		\begin{array}{l}
			\sup\limits_{\phi \in C^1(\Omega) }  \Big\{\int_{ \X}\phi(1,\x) \dd \rho_1(\x) - \int_{\X}\phi(0,\x) \dd \rho_0(\x)
			\mid
			(\partial_t\phi,\nabla_{\x}\phi) \in P\Big\},
		\end{array}
	\end{equation}
	where
	$C^1(\Omega)$ denotes the space of continuously differentiable functions on $\Omega$, 
	$\nabla_{\x}\phi$ denotes the Riemannian gradient with respect to the spatial variable, and 
	$P$ is defined by \eqref{eq:calP}.
	In what follows, we focus on the dual problem \eqref{eq:cont-dual-dot} and develop an efficient numerical approach to solve its discretized problem.

	\subsection{Finite-element discretization}
	\label{sec:fem}
	This part briefly introduces the spatial discretization of the dual DOT problem \eqref{eq:cont-dual-dot}, together with a staggered grid for the time interval $[0,1]$.
	Such a discretization framework follows from \cite{surface,hugo18}.
	Specifically, the spatial domain $\X$ is approximated by a regular triangular mesh in the sense of \cite[Definition 4.4.13]{fem}, ensuring that for each triangle, the ratio of its inscribed circle diameter to its diameter is uniformly bounded from below by a positive constant. Furthermore, the intersection of any two distinct triangles is either empty, a common vertex, or a common edge.
	The resulting discrete surface consists of a set $V\subset \mathbb{R}^3$ of vertices and a set $T$ of (closed) triangles $\f$, each with its three vertices in $V$.
	We fix the orderings $V = \{\v_1,\ldots,\v_{|V|}\}$ and $T = \{\f_1,\ldots, \f_{|T|}\}$, and define the adjacent sets
	\begin{equation*}
		\begin{cases}
			V_{\f} : = \{\v\in V\mid \v \mbox{ is a vertex of the triangle } \f\}\quad \forall \f\in T, 
			\\
			T_{\v} : = \{\f\in T\mid \v \mbox{ is a vertex of the triangle } \f \} \quad \forall \v\in V. 
		\end{cases}
	\end{equation*}
	We use $|\f|$ to denote the area of the triangle $\f$, and define $|\v| := \frac{1}{3}\sum_{\f\in T_{\v}}|\f|$ as the area of the barycentric dual cell of $\v$, which is used to represent the ``area'' of the vertex $\v$   (see \Cref{fig:mesh} for an illustration). 
	\begin{figure}
		\centering
		\begin{tikzpicture}[node distance = 1mm and 1mm]
			\node (img) {
				\includegraphics[width = 0.3\textwidth]{images/triangle.pdf}
			};
			\node[ xshift = 11pt, yshift = 7pt]{$\tilde{\v}$};
			\node[ xshift = 44pt, yshift = 10pt]{$\bar{\v}$};
			\node[ xshift = 44pt, yshift = -16pt]{$\hat{\f}$};
		\end{tikzpicture}
		\caption{
			For the two vertices (red points)
			$\bar{\v}$ and $\tilde{\v}$, the triangle $\hat{\f}$, whose area is denoted by $|\hat{\f}|$, belongs to the set $T_{\bar{\v}}$, and the vertex $\bar{\v}$ belongs to the set $V_{\hat{\f}}$. The gray region indicates the barycentric dual cell of $\tilde{\v}$, whose area is denoted by $|\tilde{\v}|$. 
		}
		\label{fig:mesh}
	\end{figure}
	For the time interval $[0,1]$, we divide it into $N$ segments.
		Then, we define the centered and staggered time index sets by
		\begin{equation*}
			\begin{array}{l}
				\G_{\text{time}}^{\text{c}}:=\{k \mid k = 0,\ldots,N\} \quad\mbox{and}\quad
				\G_{\text{time}}^{\text{st}}:=\big\{k \mid k= \frac{1}{2},\frac{3}{2},\ldots, N-\frac{1}{2}\big\}.
			\end{array}
		\end{equation*}
		For each $k\in\G_{\rm time}^{\rm c}\cup\G_{\rm time}^{\rm st}$, the corresponding time point is $k/N$.
		Since the discrete variables we will use are indexed by different combinations of time grids, vertices, and triangles, we fix the product index sets 
		\begin{equation*}
			\mathbf V^{\rm c}:=\mathcal G^{\rm c}_{\rm time}\times V,\quad
			\mathbf V^{\rm st}:=\mathcal G^{\rm st}_{\rm time}\times V,\quad
			\mathbf T^{\rm c}:=\mathcal G^{\rm c}_{\rm time}\times T,
			\quad
			\mbox{and}
			\quad
			\mathbf T^{\rm st}:=\mathcal G^{\rm st}_{\rm time}\times T .
		\end{equation*}
		These index sets are ordered lexicographically with the time index $k$ first, as illustrated in \Cref{tab:illustration}, in which the two spaces $\mathbb R^{m\times\ftc}$ and $\mathbb R^{m\times{\fts}}$ will be used frequently. Typically, we set $m=3$, and omit $m$ when $m=1$. 
		\begin{table}[H]
			\centering
			\caption{Illustration of vectors in time-vertex indexed and time-triangle indexed product spaces.}
			\scriptsize
			\begingroup
			\setlength{\tabcolsep}{2.2pt}
			\renewcommand{\arraystretch}{1.5} 
			\begin{tabular}{|c|c|c|}
				\hline
				{\bf Space} & {\bf Block vector} & {\bf Components} 
				\\
				\hline
				$\mathbb R^{\fvc}$ ~& 
				$\x=(\x_k)_{k\in\G_{\rm time}^{\rm c}} \in \mathbb R^{|\fvc|}$ &
				\multirow{2}{*}{$\x_k:=(x_{k,\v_1};\ldots;x_{k,\v_{|V|}})\in\mathbb R^{|V|}$}
				\\
				\cline{1-2}
				$\mathbb R^{\fvs}$ & 
				$\x=(\x_k)_{k\in\G_{\rm time}^{\rm st}} \in \mathbb R^{|\fvs|}$ &
				\\
				\hline
				$\mathbb R^{m\times\ftc} ~$ &
				$\y=(\y_k)_{k\in\G_{\rm time}^{\rm c}} \in \mathbb R^{m |\ftc|}$ &
				$\y_k:=(\y_{k,\f_1};\ldots;\y_{k,\f_{|T|}}) \in\mathbb R^{m |T|}$
				\\
				\cline{1-2}
				$\mathbb R^{m\times{\fts}}$ &
				$\y=(\y_k)_{k\in\G_{\rm time}^{\rm st}} \in \mathbb R^{m |\fts|}$ &
				\quad \text{with each} $\y_{k,\f_i}\in\mathbb R^m$
				\\
				\hline
			\end{tabular}
			\endgroup
			\label{tab:illustration}
		\end{table}
		Then, for constructing the discrete counterpart of $\phi \in C^1( \Omega)$ in \eqref{eq:cont-dual-dot}, we define $\bm\varphi\in \mathbb{R}^{\fvc}$ by ${\varphi}_{k,\v} := \phi(k/N,\v)$ with $(k,\v) \in \fvc$.
		Thus $\bm\varphi_{k} \in\mathbb{R}^{|V|}$ denotes the sub-vector of nodal values at the time index $k$.
		To approximate the spatial Riemannian gradient $\nabla_{\x} \phi(k/N,\cdot)$ on a triangle $\f$, we denote its three vertices by $\v_1^{\f}$, $\v_2^{\f}$, and $\v_3^{\f}$. 
		Let $h_{\v}$, $\v\in V$, be the standard hat function (piecewise linear finite-element basis), which is linear on each triangular plane and satisfies $h_{\v}(\v) =1 $ and $h_{\v}(\v') = 0$ for all $\v' \in V \setminus\{\v\}$.
		Utilizing a standard parametrization approach (see \Cref{sec:gradient}), we obtain
		\begin{equation}
			\label{eq:nabla-h}
			(\nabla h_{\v_1^{\f}},\nabla h_{\v_2^{\f}},\nabla h_{\v_3^{\f}}) = (J_{\f}^{\top})^\dag\begin{pmatrix}
				-1 &1 &0 \\
				-1 & 0& 1
			\end{pmatrix}\ 
			\mbox{with }  
			J_{\f} = (\v^{\f}_2-\v^{\f}_1,\v^{\f}_3-\v^{\f}_1)\in \mathbb{R}^{3\times 2},
		\end{equation}
		where  $(J_{\f}^{\top})^\dag:= J_{\f}(J_{\f}^{\top}J_{\f})^{-1}$ is the Moore-Penrose pseudo-inverse of $J_{\f}^{\top}$. 
		Based on \eqref{eq:nabla-h}, define the discrete spatial difference operator $\tilde \A_s:\mathbb{R}^{|V|}\rightarrow \mathbb{R}^{3\times {T}}$ by
		\begin{equation}
			\label{eq:As}
			\begin{array}{ll}
				(\tilde \A_s\bm\gamma)_{\f}:= 
					\sum\limits_{\v\in V_{\f}} \gamma_{\v} \nabla h_{\v}\in \mathbb{R}^3,
					\quad \bm\gamma\in \mathbb{R}^{|V|},
					\qquad 
					\bm{f}\in T.
			\end{array}
		\end{equation}
		Then, for each $k\in\G^{\rm c}_{\rm time}$, $\tilde \A_s \bm{\varphi}_k$
		approximates $\nabla_{\x}\phi(k/N,\cdot)$. 
		Next,  define the discrete temporal difference operator 
			$\tilde \A_t\colon\mathbb{R}^{|\G_{\text{time}}^{\text{c}}|}\rightarrow \mathbb{R}^{|\G_{\text{time}}^{\text{st}}|}$ by
			\begin{equation*}
				\begin{array}{ll}
					(\tilde \A_t\bm\zeta)_k:=N(\zeta_{k+\frac{1}{2}}-\zeta_{k-\frac{1}{2}}),
					\quad
					k= \frac{1}{2},\frac{3}{2},\ldots, N-\frac{1}{2}.
				\end{array}
		\end{equation*}
		Combining the spatial-temporal parts above, we obtain the discrete gradient operator $\A:\mathbb{R}^{\fvc}\to \mathbb{R}^{\fvs}\times \mathbb{R}^{3\times\ftc}
			$, defined by
		\begin{equation}
			\label{defA}
			\A =\begin{pmatrix}
				\A_t\\
				\A_s
			\end{pmatrix}
			\quad\mbox{with}\quad
			\A_t:= \tilde \A_t\otimes \I_{|V|}
			\quad\mbox{and}\quad
			\A_s:=    \I_{|\G^{\text{c}}_{\text{time}}|}\otimes \tilde \A_s.
		\end{equation}

		Let $\q \equiv (\bm A;\bm B) \in \mathbb{R}^{\fvs}\times \mathbb{R}^{3\times\ftc}$ be the auxiliary variable defined by $\bm A := \A_t \bm\varphi$ and $\bm B := \A_s\bm\varphi$.
		Here, $\bm A$ is indexed by $\fvs$, while $\bm B$ is indexed by $\ftc$, with each component $\bm B_{k,\f}\in\mathbb{R}^3$.
		This mismatch between index sets prevents a direct discretization of the constraint set $P$ given by \eqref{eq:calP} in the dual DOT problem \eqref{eq:cont-dual-dot}.
		To address this issue, and for the convenience of the forthcoming analysis, we define the time interpolation operator $\L_{t}\colon \mathbb{R}^{\fvc}\rightarrow \mathbb{R}^{\fvs}$ and the space interpolation operator $
			\mathcal{L}_{s} \colon\mathbb{R}^{\fvc}\rightarrow \mathbb{R}^{\ftc}$ by
		\begin{equation}
			\label{eq:li}
			\begin{cases}
				(\L_{t}\bm\varphi)_{k,\v}:= \frac{\varphi_{k+\frac{1}{2},\v}+\varphi_{k-\frac{1}{2},\v}}{2}, & k \in \G_{\text{time}}^{\text{st}},
				\\[.2em]
				(\mathcal{L}_{s} \bm\varphi)_{k,\f}:=\frac{1}{3}\sum_{\v\in V_{\f}}\varphi_{k,\v}, & k \in\G_{\text{time}}^{\text{c}}.
			\end{cases}
		\end{equation}
		To discretize the inner product $\langle\cdot,\cdot\rangle_{g(\x)}$ of $\cal X$ that incorporates the geometric information, we set the weight matrices and define
		\begin{equation}
			\label{eq:weighted-matrix}
			\begin{array}{ll}
				\W_V:=\diag(|\v_1|,\ldots,|\v_{|V|}|),
				&
				\W_T:=\diag(|\f_1|,\ldots,|\f_{|T|}|),
				\\[1mm]
				\W_{\mathrm{c},V}:=\frac{1}{N}\I_{|\G^{\text{c}}_{\text{time}}|}\otimes \W_V, 
				&
				\W_{\mathrm{c},T}:=\frac{1}{N}\I_{|\G^{\text{c}}_{\text{time}}|}\otimes \W_T\otimes \I_3, 
				\\[1mm]
				\W_{\mathrm{st},V}:=\frac{1}{N}\I_{|\G^{\text{st}}_{\text{time}}|}\otimes \W_V,
				&
				\W_{\mathrm{st},T}:=\frac{1}{N}\I_{|\G^{\text{st}}_{\text{time}}|}\otimes \W_T\otimes \I_3.
			\end{array}
		\end{equation}
		Then, we define
		\begin{equation}
			\label{eq:inner-tv}
			\begin{array}{ll}
				\langle\bm x ,\hat{\bm x }\rangle_{\mathrm{c},V}:= 
				\langle\bm x ,\W_{\mathrm{c},V} \hat{\bm x }\rangle
				\  \,
				\forall \bm x,\hat{\bm x} \in \mathbb{R}^{\fvc}, 
				&
				\langle\bm y ,\hat{\bm y }\rangle_{\mathrm{c},T}:= 
				\langle\bm y ,\W_{\mathrm{c},T} \hat{\bm y }\rangle
				\ \,
				\forall \bm y,\hat{\bm y} \in \mathbb{R}^{3\times\ftc}, 
				\\
				\langle\bm x ,\hat{\bm x }\rangle_{\mathrm{st},V}:= 
				\langle\bm x ,\W_{\mathrm{st},V} \hat{\bm x }\rangle
				\ \,
				\forall \bm x,\hat{\bm x} \in \mathbb{R}^{\fvs}, 
				&
				\langle\bm y ,\hat{\bm y }\rangle_{\mathrm{st},T}:= 
				\langle\bm y ,\W_{\mathrm{st},T} \hat{\bm y }\rangle
				\  \,
				\forall \bm y,\hat{\bm y} \in \mathbb{R}^{3\times\fts}.
			\end{array}
		\end{equation}
		Then, the adjoint operators of $\L_{t}$ and $\L_{s}$ are defined from 
		\begin{equation}
			\label{adjoint}
			\begin{array}{ll}
				\langle \bm\varphi,\L_{t}^*\bm\psi\rangle_{\mathrm{c},V}=\langle \L_{t}\bm\varphi,\bm\psi\rangle_{\mathrm{st},V} 
				\ \mbox{and}
				\ 
				\langle \bm\varphi,\L_{s}^* \y\rangle_{\mathrm{c},V}=
				\tfrac{1}{N}\sum_{(k,\f)\in\ftc}|\f|(\L_s\bm{\varphi})_{k,\f} y_{k,\f}.
			\end{array}
		\end{equation}
		Specifically, for $\y \in \mathbb{R}^{\ftc}$, one has 
		\begin{equation}
			\label{eq:lxadj}
			\begin{array}{ll}
				(\mathcal{L}_{s}^*\y)_{k,\v}: = \frac{1}{3|\v|}
				\sum\limits_{\f\in T_{\v}} |\f| y_{k,\f}.
			\end{array}
		\end{equation}
		
		Based on the above discussions, the set $P$ in \eqref{eq:calP} is discretized as
		\begin{equation}
			\label{eq:dsP}
			\mathbb{P} := \Big\{\bm{q}\equiv (\bm{A};\bm{B}) \in \mathbb{R}^{\fvs}\times \mathbb{R}^{3\times{\ftc}} \ \big\vert \  A_{k,\v} + \tfrac{1}{2} \big(\L_{t} \L^*_{s} (\vertiii{\bm{B}}^2)\big)_{k,\v}\leq 0 \ \ \forall \, (k,\v)\in \fvs\Big\},
		\end{equation}
		where $\vertiii{\bm{B}}^2\in\mathbb{R}^{\ftc}$ is defined by
		$(\vertiii{\bm{B}}^2)_{k,\f} := \|\bm{B}_{k,\f}\|^2$ with $\bm{B}_{k,\f}\in\mathbb{R}^3$.
		As a consequence, the discretized problem of \eqref{eq:cont-dual-dot} on the discrete surface is given by the following minimization reformulation
		\begin{equation}
			\label{eq:opt-ddot}
			\min_{\bm{\varphi},\q}
			\left\{\langle \bm\varphi,\bm{c}\rangle_{\mathrm{c},V}:=\sum_{\v \in V}|\v| \varphi_{0,\v} \mu_{0,\v}-\sum_{\v \in V}|\v| \varphi_{N,\v} \mu_{N,\v}
			\ \Big\vert\
			\begin{array}{ll}
				\mathcal{A}\bm{\varphi} = \q\equiv (\bm{A};\bm{B}),\\
				\bm{A} + \frac{1}{2} \L_{t} \L^*_{s} (\vertiii{\bm{B}}^2)\le 0
			\end{array}
			\right\},
		\end{equation}
		where $\bm\mu_0,\bm\mu_N\in \big\{\bm\mu\in\mathbb{R}_+^{|V|}\mid \sum_{\v\in V} |\v|\mu_{\v} =1  \big\}$ are the normalized discrete representations of  $\rho_0$ and $\rho_1$.
		For each $\v\in V$, the quantities
		$|\v|\mu_{0,\v}$ and $|\v|\mu_{N,\v}$ are decided by the masses of $\rho_0$ and $\rho_1$ on the barycentric dual cell associated with $\v$.
		
		\begin{remark}
			The above discretization scheme follows the framework established in \cite{hugo18}. Consequently, under the mesh regularity and consistency assumptions used in \cite[Section 4]{hugo20}, the interpolated solutions of the discretized problems converge to the solution of the continuous problem as the discretization is progressively refined.
		\end{remark}

		\section{Analysis on discretized dual DOT problem}
		\label{sec:anal}
		
		In this section, we formulate the KKT system of the discretized problem \eqref{eq:opt-ddot} and establish that it always admits a solution, an essential property safeguarding the convergence of algorithms. 
		
		With the dual variable
		$\bm\lambda \equiv (\bm\lambda_0,\bar{\bm{\lambda}}, \bm\lambda_q)
		\in
		\mathbb{R}^{\fvs}
			\times
			\mathbb{R}^{3\times{\ftc}}
			\times
			\mathbb{R}^{\fvs}$, the Lagrangian function for \eqref{eq:opt-ddot} is given by
		\begin{equation}
			\label{eq:dot-lag}
			\begin{array}{ll}
				\mathfrak{L}(\bm{\varphi},\q;\bm\lambda) =
				\langle \bm\varphi,\bm{c}\rangle_{\mathrm{c},V} + \langle \bm\lambda_0,\A_t\bm\varphi-\bm A\rangle_{\mathrm{st},V} 
				\\
				\hspace{9.8em}+\langle\bar{\bm\lambda},\A_s\bm\varphi-\bm B\rangle_{\mathrm{c},T}-\langle  \bm\lambda_q, \bm{A} + \frac{1}{2} \L_{t} \L^*_{s} (\vertiii{\bm{B}}^2)\rangle_{\mathrm{st},V},
			\end{array}
		\end{equation}
		where $\bm\varphi\in \mathbb{R}^{\fvc}$
		and
		$\q \equiv (\bm A;\bm B) \in \mathbb{R}^{\fvs}\times\mathbb{R}^{3\times{\ftc}}$.
		
		Let $\A^* = (\A^*_t,\A^*_s)$ be the Euclidean adjoint of the discrete gradient operator $\A$ defined in \eqref{defA}.
		It is easy to see that
		\begin{equation}\nonumber
			\begin{array}{ll}
				\langle \A_s\bm\varphi,\bar{\bm\lambda}\rangle_{\mathrm{c},T}
				= \frac{1}{N}\sum\limits_{(k,\f)\in \ftc}|\f| \big(\sum_{\v\in V_{\f}}\varphi_{k,\v}\nabla h_{\v}\big)^{\top}\bar{\bm\lambda}_{k,\f}
				\\[3mm]
				=\frac{1}{N}\sum\limits_{(k,\v)\in \fvc} \sum\limits_{\f\in T_{\v}}
				\varphi_{k,\v}|\f|\big(\nabla h_{\v}^{\top}\bar{\bm\lambda}_{k,\f}\big)
				=\frac{1}{N}\sum\limits_{(k,\v)\in \fvc}
				|\v|
				\varphi_{k,\v}
				\big(\sum\limits_{\f\in T_{\v}}\frac{|\f|}{|\v|}(\nabla h_{\v}^{\top}\bar{\bm\lambda}_{k,\f})
				\big)
				\\[4mm]
				\displaystyle
				= \langle\bm\varphi,\W_{\mathrm{c},V}^{-1}\A_s^* \W_{\mathrm{c},T}\bar{\bm\lambda} \rangle_{\mathrm{c},V}.
			\end{array}
		\end{equation}
		Similarly, one has that
		$\langle \A_t\bm\varphi,\bm\lambda_0\rangle_{\mathrm{st},V} = \langle \bm\varphi,\W_{\mathrm{c},V}^{-1}\A_t^* \W_{\mathrm{st},V}\bm\lambda_0\rangle_{\mathrm{c},V}$.
		Taking the above two equalities to \eqref{eq:dot-lag} and using \eqref{adjoint}, one has
		\begin{equation}
			\label{alfnew}
			\begin{array}{ll}
				\mathfrak{L}(\bm\varphi,\q;\bm\lambda)
				=
				\big\langle \bm\varphi,\bm{c} + \W_{\mathrm{c},V}^{-1}\A_t^* \W_{\mathrm{st},V}\bm\lambda_0 + \W_{\mathrm{c},V}^{-1}\A_s^* \W_{\mathrm{c},T}\bar{\bm\lambda}\big\rangle_{\mathrm{c},V}
				\\
				\hspace{10em}
				- \langle \bm\lambda_0+\bm\lambda_q,\bm A\rangle_{\mathrm{st},V}-\langle\bar{\bm\lambda},\bm B\rangle_{\mathrm{c},T}-\langle  \bm\lambda_q,  \frac{1}{2} \L_{t} \L^*_{s} (\vertiii{\bm{B}}^2)\rangle_{\mathrm{st},V}.
			\end{array}
		\end{equation}
		So the KKT system of \eqref{eq:opt-ddot} is given by
		\begin{equation}
			\label{eq:ddot-kkt}
			\begin{cases}
				\A\bm{\varphi}=\q \equiv (\bm A;\bm B), 
				\  
				\W_{\mathrm{c},V}\bm{c}+\A^*(\W_{\mathrm{st},V}\bm\lambda_0;\W_{\mathrm{c},T}\bar{\bm{\lambda}})=0,
				\ 
				\bm\lambda_0+\bm\lambda_q = 0,
				\\[1mm]
				0\leq\bm\lambda_0\perp \bm{A}+ \frac{1}{2} \L_{t} \L^*_{s} \big(\vertiii{\bm{B}}^2\big)\leq 0, 
				\ 
				\bar{\bm{\lambda}} = \big( (\L_{s}\L^*_{t}\bm\lambda_0)\otimes\bm{1}_3\big)\odot \bm{B}.
			\end{cases}
		\end{equation}
		Moreover, we have the following result on the unconditional existence of solutions to the KKT system \eqref{eq:ddot-kkt}.
		\begin{theorem}
			\label{prop:dotkkt}
			The solution set to the KKT system \eqref{eq:ddot-kkt} is not empty, i.e., \eqref{eq:opt-ddot} has both optimal solutions and Lagrange multipliers.
		\end{theorem}

		\begin{proof}
			By directly calculating from \eqref{alfnew}, the objective function of the dual problem to \eqref{eq:opt-ddot} is given by  
			\begin{equation*}
				\begin{array}{ll}
					\inf\limits_{\bm\varphi,\bm A, \bm B} \mathfrak{L}(\bm\varphi,\q;\bm\lambda)
					\\[2mm]
					=
					\begin{cases}
						\inf\limits_{\bm B}\Xi(\bm B)
						&
						\mbox{if}\ 
						\W_{\mathrm{c},V}\bm{c}+\A^*(\W_{\mathrm{st},V}\bm\lambda_0;\W_{\mathrm{c},T}\bar{\bm{\lambda}})=0,
						\ 
						\bm\lambda_q + \bm\lambda_0=0,
						\ \L_{s}\L^*_{t}\bm\lambda_q\leq 0,
						\\
						-\infty &\mbox{otherwise},
					\end{cases}
				\end{array}
			\end{equation*}
			where $\Xi(\bm B):=
			-\langle\bar{\bm\lambda},\bm B\rangle_{\mathrm{c},T}  
			+ {\frac{1}{2}\langle \bm\lambda_0,\L_t\L_s^*\vertiii{\bm B}^2\rangle_{\mathrm{st},V}}$. 
			According to \cref{eq:weighted-matrix,eq:inner-tv,adjoint},
			\begin{equation}\nonumber
				\begin{array}{lcl}
					\Xi(\bm B)
					&=&-\langle\bar{\bm\lambda} ,\W_{\mathrm{c},T} \bm B\rangle
					+  \frac{1}{2N} \sum\limits_{(k,\f)\in\ftc}|\f|(\L_s\L_t^*\bm\lambda_0)_{k,\f} \|\bm B_{k,\f}\|^2
					\\[1mm]
					&=&\frac{1}{N}\sum\limits_{(k,\f)\in\ftc} |\f|\big(\frac{1}{2}( \L_{s}\L^*_{t}\bm\lambda_0)_{k,\f}\|\bm{B}_{k,\f}\|^2-\bar{\bm{\lambda}}_{k,\f}^{\top}\bm{B}_{k,\f}\big).
				\end{array}
			\end{equation}
			Note that $\bm\lambda_0\ge 0$ implies $\L_{s}\L^*_{t}\bm\lambda_0\ge 0$.
			Thus, the dual problem of \eqref{eq:opt-ddot} is given by
			\begin{equation}
				\label{eq:opt-ddot-dual}
				\begin{array}{ll}
					\max\limits_{(\bm\lambda_0,\bar{\bm{\lambda}})\in \mathbb{C}}
					\Big\{-\frac{1}{2N}\sum\limits_{(k,\f)\in \mathbb{I}}|\f|\frac{\|\bar{\bm{\lambda}}_{k,\f}\|^2}{(\L_{s}\L^*_{t}\bm\lambda_0)_{k,\f}}
					\, \ \big\vert \ \,
					\W_{\mathrm{c},V}\bm{c}+\A^*(\W_{\mathrm{st},V}\bm\lambda_0;\W_{\mathrm{c},T}\bar{\bm{\lambda}})=0
					\Big\},
				\end{array}
			\end{equation}
			where $\mathbb{I}:=\big\{(k,\f)\in\ftc\mid(\L_{s}\L^*_{t}\bm\lambda_0)_{k,\f}>0\big\}$ is an index set and $\mathbb{C}$ is the constraint set defined by
			$\mathbb{C}:=\big\{(\bm\lambda_0;\bar{\bm{\lambda}})
			\mid  \bm\lambda_0\ge 0, \ \mbox{and}\ \bar{\bm{\lambda}}_{k,\f} = 0 \text{ if }  (\L_{s}\L^*_{t}\bm\lambda_0)_{k,\f}=0\big\}$.

			We first show that \eqref{eq:opt-ddot-dual} admits a strictly feasible point.
			Define the vector $\bm{\xi}_0:=\frac{1}{N|V|}\W_{\mathrm{st},V}^{-1}\bm{1}_{|\fvs|}\in\mathbb{R}^{\fvs}$.
			According to \eqref{defA} one has
			\begin{equation}
				\label{eq:precxi}
				(\A_t^*\W_{\mathrm{st},V}\bm{\xi}_0)_{k,\v}=\begin{cases}
					-\frac{1}{|V|} &\mbox{if } k = 0,\\
					\frac{1}{|V|} & \mbox{if }  k = N,\\
					0              & \text{otherwise.}
				\end{cases}
			\end{equation}
			Then by 
			the definition of $\bm{c}$ in \eqref{eq:opt-ddot}, one has that
			\begin{equation}
				\label{eq:cxi}
				\sum_{\v\in V}(\W_{\mathrm{c},V}\bm{c}+\A_t^*\W_{\mathrm{st},V}\bm{\xi}_0)_{0,\v}
				= \sum_{\v\in V}(\W_{\mathrm{c},V}\bm{c}+\A_t^*\W_{\mathrm{st},V}\bm{\xi}_0)_{N,\v}=0.
			\end{equation}
			Fix a triangle $\f\in T$ with the vertices being denoted, without loss of generality, by $\v_1$, $\v_2$, and $\v_3$.
			Based on \eqref{eq:nabla-h} and \eqref{eq:As},
			we observe that if $\varphi_{k,\v_1}=\varphi_{k,\v_2}=\varphi_{k,\v_3}$ for a fixed time index $k$, then 
			\begin{equation*}\
				(\tilde \A_s\bm\varphi_k)_{\f} = \sum_{\v\in V_{\f}} \varphi_{k,\v}\nabla h_{\v}  = (J_{\f}^{\top})^\dag\begin{pmatrix}
					-1 &1 &0 \\
					-1 & 0& 1
				\end{pmatrix} \begin{pmatrix}
					\varphi_{k,\v_1}\\
					\varphi_{k,\v_2}\\
					\varphi_{k,\v_3}
				\end{pmatrix}=0.
			\end{equation*}
			On the other hand, if $(\tilde \A_s\bm{\varphi}_k)_{\f}=0$, one has from \eqref{eq:nabla-h} and \eqref{eq:As} that 
			\begin{equation*}
				0=J_{\f}^{\top}(\tilde \A_s\bm{\varphi}_k)_{\f}  = J_{\f}^{\top}(J_{\f}^{\top})^\dag\begin{pmatrix}
					-1 &1 &0 \\
					-1 & 0& 1
				\end{pmatrix} \begin{pmatrix}
					\varphi_{k,\v_1}\\
					\varphi_{k,\v_2}\\
					\varphi_{k,\v_3}
				\end{pmatrix}= \begin{pmatrix}
					-1 &1 &0 \\
					-1 & 0& 1
				\end{pmatrix} \begin{pmatrix}
					\varphi_{k,\v_1}\\
					\varphi_{k,\v_2}\\
					\varphi_{k,\v_3}
				\end{pmatrix}, 
			\end{equation*}
			implying that $\varphi_{k,\v_1}=\varphi_{k,\v_2}=\varphi_{k,\v_3}$.
			Then, it is easy to see that $\ker(\tilde \A_s)$ is spanned by $\bm{1}_{|V|}$.
			Moreover, since $\A_s = \I_{|\G^{\text{c}}_{\text{time}}|}\otimes\tilde \A_s$ by \eqref{defA}, $\ker(\A_s)$ is spanned by
			\begin{equation}
				\label{span}
				\big \{\e_1\otimes \bm{1}_{|V|},\ldots,\e_{N+1}\otimes \bm{1}_{|V|} \big \},
			\end{equation}
			where $\e_i\in\mathbb{R}^{N+1}$ are the standard unit vectors.
			Thus, from \eqref{eq:precxi}, \eqref{eq:cxi}, and the definition of $\bm{c}$ in \eqref{eq:opt-ddot}, we deduce that $\langle -\W_{\mathrm{c},V}\bm{c}-\A_t^*\W_{\mathrm{st},V}\bm{\xi}_0, \x\rangle=0$ for all $\x\in \ker(\A_s)$,
			which implies that $-\W_{\mathrm{c},V}\bm{c}-\A_t^*\W_{\mathrm{st},V}\bm{\xi}_0\in \rge(\A_s^*)$.
			Consequently, using the fact that $\W_{\mathrm{c},T}$ is nonsingular, there exists a vector $\bar{\bm{\xi}}\in \mathbb{R}^{3\times{\ftc}}$ such that
			\begin{equation}
				\label{feaseq}
				\A^*_s\W_{\mathrm{c},T}\bar{\bm{\xi}}=
				-\W_{\mathrm{c},V}\bm{c}-\A_t^*\W_{\mathrm{st},V}\bm{\xi}_0.
			\end{equation}
			Moreover, since $\bm\xi_0>0$, it follows from the definition of $\L_{t}^*$ and $\L_{s}$ that $\L_{s}\L_{t}^*\bm\xi_0>0$. This, together with \eqref{feaseq}, implies that $\bm\xi:=(\bm\xi_0;\bar{\bm\xi})$ is a strictly feasible point of \eqref{eq:opt-ddot-dual}.
			Note that the objective function of \eqref{eq:opt-ddot-dual} is non-positive.
			Then, by \cite[Lemma 36.1]{rock-1}, the objective function of \eqref{eq:opt-ddot} in the feasible set is bounded from below.
			
			Next, we show that \eqref{eq:opt-ddot} is also strictly feasible.
			Set $\hat{\bm\varphi}\in\mathbb{R}^{\fvc}$ with
			$\hat{\varphi}_{k,\v}:= -(k+1)$.
			Let
			$\hat{\q}:=\A\hat{\bm\varphi},$ and define $\hat{\bm{A}} :=  \A_t\hat{\bm\varphi}$ and $\hat{\bm{B}}:=\A_s\hat{\bm\varphi}$.
			Note that the components $\hat{\varphi}_{k,\v}$ depend solely on the index $k$ and are invariant with respect to the change of the index $\v$.
			It is easy to see that $\hat{\bm\varphi}$ lies in $\ker(\A_s)$, which is spanned by the vectors in \eqref{span}.
			So we have $\hat{\bm{B}}=0$.
			Moreover, since the value of $\hat{\varphi}_{k,\v}$ decreases as $k$ increases for any fixed $\v$,
			it follows that $\hat{A}_{k,\v}<0$ by \eqref{defA}.
			Thus, $(\hat{\bm\varphi},\hat{\q})$ is a strictly feasible point for problem \eqref{eq:opt-ddot}.
			
			Recall that the objective function of \eqref{eq:opt-ddot} in the feasible set is bounded from below.
			Using its strict feasibility and applying \cite[Theorem 28.2]{rock-1} implies that it admits a Kuhn-Tucker vector, which is the solution to the dual problem \eqref{eq:opt-ddot-dual} by  \cite[Theorems 30.4 \& 30.5]{rock-1}.
			Note that the optimal value of \eqref{eq:opt-ddot} is finite and \eqref{eq:opt-ddot-dual} was shown to be strictly feasible,
			it also admits a Kuhn-Tucker vector by \cite[Theorem 28.2]{rock-1}.
			Then we know from \cite[Corollary 28.3.1]{rock-1} that the solution set to the KKT system \eqref{eq:ddot-kkt} is non-empty.
			This completes the proof.
		\end{proof}

		\section{Decoupling and SOCP reformulation}
		\label{sec:soc}
		This section reformulates the discretized dual DOT problem \eqref{eq:opt-ddot} as a linear SOCP to ease the algorithmic design in the next section.
			Our first step is to decouple the quadratic inequalities in the constraint set $\mathbb{P}$ defined by \eqref{eq:dsP}, since $\L_t\L_s^*(\vertiii{\bm B}^2)$ couples the triangle-indexed spatial variable $\bm B$ with the vertex-indexed inequalities, and then to transfer the decoupled quadratic inequality constraints to conic constraints. 
			
			Let $(\mathbb{R}^{3\times{\fts}})^6$ denote the Cartesian product of six copies of $\mathbb{R}^{3\times{\fts}}$, and write $\y \in(\mathbb{R}^{3\times{\fts}})^6$ as $\y = (\y_1;\ldots;\y_6)$ with each $\y_i\in \mathbb{R}^{3\times{\fts}}$.
		Define the linear operators
		$\widetilde{\F}\colon \mathbb{R}^{3\times{\ftc}}\rightarrow (\mathbb{R}^{3\times{\fts}})^6$
		and $\F\colon \mathbb{R}^{\fvs} \times \mathbb{R}^{3\times{\ftc}} \to \mathbb{R}^{\fvs}\times (\mathbb{R}^{3\times{\fts}})^6$ by
		\begin{equation}
			\label{eq:tildef}
			\widetilde{\F}: =
			\begin{pmatrix}
				\bm{1}_3\otimes \mathcal{C}_-\otimes \I_{3|T|}
				\\
				\bm{1}_3\otimes \mathcal{C}_+\otimes \I_{3|T|}
			\end{pmatrix}
			\quad\mbox{and}\quad
			\mathcal{F} :=\diag\big(\I_{|\fvs|},\widetilde{\mathcal{F}}\big),
		\end{equation}
		for the purpose of copying vectors from centered time grids to staggered ones,  
		where  $\mathcal{C}_-,\mathcal{C}_+\colon \mathbb{R}^{|\G_{\text{time}}^{\text{c}}|}\rightarrow \mathbb{R}^{|\G_{\text{time}}^{\text{st}}|}$ are defined by
		\begin{equation}
			\label{eq:ftp}
			\mathcal{C}_- := \begin{pmatrix}
				\I_{|\G_{\text{time}}^{\text{st}}|} & \bm{0}_{|\G_{\text{time}}^{\text{st}}|}
			\end{pmatrix}\quad\mbox{and}\quad
			\mathcal{C}_+ := \begin{pmatrix}
				\bm{0}_{|\G_{\text{time}}^{\text{st}}|}& \I_{|\G_{\text{time}}^{\text{st}}|}
			\end{pmatrix}.
		\end{equation}
		Then, for $\q \equiv (\bm{A};\bm{B})\in \mathbb{R}^{\fvs} \times \mathbb{R}^{3\times{\ftc}}$, one can write
			$$
			\F\q = (\bm{A};\bm{D}^-;\bm{D}^+) \quad\mbox{and} \quad \bm{D}^\pm = (\bm{D}^\pm_1;\bm{D}^\pm_2;\bm{D}^\pm_3),
			$$
			where each $\bm{D}^\pm_i\in\mathbb{R}^{3\times{\fts}}$. More explicitly, for each $(k,\f) \in \fts$ and $i = 1,2,3,$ one has $(\bm D_i^-)_{k,\f} = \bm B_{k-\frac{1}{2},\f}$ and $(\bm D_i^+)_{k,\f} = \bm B_{k+\frac{1}{2},\f}$. 
			
			For each triangle $\f \in T$, fix an ordering of its three vertices and denote them by $\{\v_1^{\f},\v_2^{\f},\v_3^{\f}\}$.
		For $\v\in V$, define the set of triangles in $T_{\v}$ for which $\v$ is identified as the $i$-th vertex of triangle $\f$ by
		\begin{equation}
			\label{eq:set-tvi}
			T_{\v,i} := \{\f\in T_{\v}\mid \v^{\f}_i = \v \}, \quad i=1,2,3.
		\end{equation}
		Then, for every $\v\in V$, the sets $T_{\v,1}, T_{\v,2},T_{\v,3}$ form a disjoint partition of $T_{\v}$. Moreover, for each fixed $i=1,2,3,$ the sets $\{T_{\v,i}\}_{\v\in V}$ form a disjoint partition of $T$, i.e.,
		\begin{equation}
			\label{propertytvi}
			T_{\v,i}\cap T_{\v,j}=\emptyset\
			\forall\, i\neq j,
			\quad
			\bigcup\limits_{i=1,2,3}
			T_{\v,i}=T_{\v},
			\quad
			\mbox{and}
			\quad
			\bigcup\limits_{\v\in V}T_{\v,i}=T
			\,\ \forall\, i=1,2,3.
		\end{equation}
		Define the diagonal linear operators $\D_i:\mathbb{R}^{3\times{\fts}}\to
		\mathbb{R}^{3\times{\fts}}$ by
		\begin{equation}
			\label{defD}
			\begin{array}{ll}
				\D_i:=
				\I_{|\G^{\text{st}}_{\text{time}}|}
				\otimes
				\diag
				\Big(\sqrt{\frac{|\f_1|}{|\v_i^{\f_1}|}},\ldots,
				\sqrt{\frac{|\f_{|T|}|}{|\v_i^{\f_{|T|}}|}}
				\,\Big)
				\otimes \I_3,
				\quad
				i=1,2,3.
			\end{array}
		\end{equation}
		For $(k,\v) \in \fvs$, and $i=1,2,3,$ define the linear operators
			$\P_{i}^{k,\v}\colon
			\mathbb{R}^{3\times{\fts}}\to \mathbb{R}^{3|T_{\v,i}|}$ by
		\begin{equation}
			\label{eq:pvi}
			\begin{array}{ll}
				\P_{i}^{k,\v}\bm{D}:=\Big(
				\bm{D}_{k,\f_1^{\v,i}};\ldots; \bm{D}_{k,\f_{|T_{\v,i}|}^{\v,i}}\Big),
				\quad
				\bm{D}\in \mathbb{R}^{3\times{\fts}},
			\end{array}
		\end{equation}
		where $\f_1^{\v,i},\ldots,\f^{\v,i}_{|T_{\v,i}|}$ are the ordered triangles in  $T_{\v,i}$.
		Thus, $\P_{i}^{k,\v}$ selects the components associated with the specific time index $k$ and the triangles in $T_{\v,i}$.
		Then, we define the linear operators
		$\T_{i}^{k,\v}\colon
			\mathbb{R}^{3\times{\fts}}\to \mathbb{R}^{3|T_{\v,i}|}$ ($(k,\v)\in\fvs$, and $i=1,2,3$) by
		\begin{equation}
			\label{eq:tvkvi}
			\begin{array}{rl}
				\T_{i}^{k,\v}\bm{D}
				&=\P_{i}^{k,\v}\D_i\bm{D}
				=\Big(\sqrt{\frac{|\f_1^{\v,i}|}{|\v|}}\bm{D}_{k,\f_1^{\v,i}};\ldots;\sqrt{\frac{|\f_{|T_{\v,i}|}^{\v,i}|}{|\v|}}\bm{D}_{k,\f_{|T_{\v,i}|}^{\v,i}}\Big),
				\quad
				\bm{D}\in \mathbb{R}^{3\times{\fts}}.
			\end{array}
		\end{equation}
		Note that, if $\f\in T_{\v,i},$ then $\v^{\f}_i = \v$, and hence $|\v| = |\v_i^{\f}|$.
				
		Using the above definitions, we decouple the inequality constraints of $\mathbb{P}$ in \eqref{eq:dsP}.
		Specifically, for $(\bm{A};\bm{B}) \in \mathbb{R}^{\fvs}\times\mathbb{R}^{3\times{\ftc}}$, it follows from \eqref{eq:lxadj} that
		\begin{equation*}
			\begin{array}{ll}
				(\mathcal{L}_{s}^*(\vertiii{\bm{B}}^2))_{k,\v} = \frac{1}{3|\v|}\sum\limits_{\f\in T_{\v}} |\f| \|{\bm{B}}_{k,\f}\|^2
				\quad \forall\,
				(k,\v)\in \fvc.
			\end{array}
		\end{equation*}
		By \eqref{eq:li}, for every $(k,\v)\in \fvs$, 
		\begin{equation*}
			\begin{array}{lll}
				\big(\L_{t}(\mathcal{L}_{s}^*(\vertiii{\bm{B}}^2))\big)_{k,\v}
				&=&\frac{1}{2}\big(
				(\mathcal{L}_{s}^*(\vertiii{\bm{B}}^2))_{k+\frac{1}{2},\v}+(\mathcal{L}_{s}^*(\vertiii{\bm{B}}^2))_{k-\frac{1}{2},\v}
				\big)
				\\[2mm]
				&=& \frac{1}{6|\v|} \big(\sum\limits_{\f\in T_{\v}}|\f|\|\bm B_{k-\frac{1}{2},\f}\|^2+\sum\limits_{\f\in T_{\v}}|\f|\|\bm B_{k+\frac{1}{2},\f}\|^2\big).
			\end{array}
		\end{equation*}
		Set $(\bm{D}^-;\bm{D}^+):=\widetilde \F \bm{B}$ with $\cal \widetilde F$ being defined by \eqref{eq:tildef}. Then, by
		\eqref{eq:set-tvi}, \eqref{propertytvi}, and \eqref{eq:tvkvi}, for every $(k,\v)\in \fvs$,
		\begin{equation}
			\label{eq:tildep}
			\begin{array}{ll}
				A_{k,\v} + \frac{1}{2} (\L_{t}(\mathcal{L}_{s}^*\vertiii{\bm{B}}^2))_{k,\v}
				=A_{k,\v} + \frac{1}{12}\sum\limits_{i=1}^3(\|\T_{i}^{k,\v}\bm{D}^-_i\|^2+\|\T_{i}^{k,\v}\bm{D}^+_i\|^2),
			\end{array}
		\end{equation}
		which decouples the inequality constraint in $\mathbb{P}$.
		
		We next express each decoupled inequality as an equivalent second-order cone constraint.
		Note that the left-hand side of \eqref{eq:tildep} further equals to 
		\begin{equation}
			\label{eq:tildep2}
			\begin{array}{ll}
				\frac{1}{4}\big((1+A_{k,\v})^2 + \frac{1}{3}\sum\limits_{i=1}^3(\|\T_{i}^{k,\v}\bm{D}^-_i\|^2+\|\T_{i}^{k,\v}\bm{D}^+_i\|^2)- (1-A_{k,\v})^2\big).
			\end{array}
	\end{equation}
	Define the linear operator 
		$\B\colon \mathbb R^{\fvs}\times(\mathbb{R}^{3\times{\fts}})^6
		\to
		\mathbb R^{\fvs}\times(\mathbb{R}^{3\times{\fts}})^6
		\times\mathbb R^{\fvs}$ by 
		\begin{equation*}
			\begin{array}{ll}
				\B (\x;\y) := \big( -\x;\frac{\sqrt{3}}{3}\y; \x\big),
				\quad 
				\x \in\mathbb R^{\fvs}\ , \y\in (\mathbb{R}^{3\times{\fts}})^6.
			\end{array}
		\end{equation*} 
	Thus, by $\F\q \equiv \F (\bm A;\bm{B})
		=(\bm A; \bm D^-; \bm D^+)$, it follows that 
		\begin{equation*}
			\begin{array}{l}
				\B\F\q + \d = \big(\bm 1_{|\fvs|} - \bm A; \frac{\sqrt{3}}{3}\bm D^-; \frac{\sqrt{3}}{3}\bm D^+;\bm 1_{|\fvs|} + \bm A\big),
			\end{array}
		\end{equation*}
		where $\d: = (\bm{1}_{|\fvs|};\bm{0}_{18|\fts|};\bm{1}_{|\fvs|})$.
		For each $(k,\v)\in\fvs$, define
		the linear operators $\T^{k,\v}\colon \mathbb R^{\fvs}\times(\mathbb{R}^{3\times{\fts}})^6
		\times\mathbb R^{\fvs}
		\to
		\mathbb {R}^{2+6|T_{\v}|}$ by
	\begin{equation}
		\label{eq:tvkv}
\T^{k,\v}:=\diag\big(\e_{k,\v}^{\top},\T_{1}^{k,\v},\T_{2}^{k,\v},\T_{3}^{k,\v},\T_{1}^{k,\v},\T_{2}^{k,\v},\T_{3}^{k,\v},\e_{k,\v}^{\top}\big),
	\end{equation}
	where $\bm e_{k,v}\in\mathbb{R}^{\fvs}$ is the unit vector associated with $(k,\v)$, and the operators $\T_{i}^{k,\v}$ (for $i=1,2,3$) are defined by  \eqref{eq:tvkvi}.
	Then, from \eqref{eq:tildep}, \eqref{eq:tildep2}, and the definitions above, one can see that
	\begin{equation}
		\label{eqsoc}
		\begin{array}{ll}
			A_{k,\v} + \frac{1}{2} \big(\L_{t}(\mathcal{L}_{s}^*\vertiii{\bm{B}}^2)\big)_{k,\v}\le 0
			\quad\Leftrightarrow\quad
			\T^{k,\v}(\B\F\q+\d) \in\mathbb{K}^{k,\v}_{\mathrm{soc}}
			\quad \forall\, (k,\v)\in \fvs,
		\end{array}
	\end{equation}
	where the dimension of each second-order cone $\mathbb{K}^{k,\v}_{\mathrm{soc}}$ is $2+6|T_{\v}|$.
	\begin{remark}
		\label{rmkbdry}
		It is easy to see that the strict inequality (resp., equality) in \eqref{eqsoc} corresponds to the interior (resp., boundary) of  $\mathbb{K}^{k,\v}_{\mathrm{soc}}$.
	\end{remark}

	 By stacking the constraints in \eqref{eqsoc} over all $(k,\v)\in \fvs$, we obtain the product cone used in the SOCP reformulation. Recall that $V = \{\v_1,\ldots,\v_{|V|}\}$. Define
	\begin{equation}
		\label{eq:setQ}
		\mathbb{Q} := \mathbb{Q}^{\frac{1}{2}}\times\mathbb{Q}^{\frac{3}{2}}\times\cdots\times\mathbb{Q}^{N-\frac{1}{2}} \quad \text{with}
		\quad \mathbb{Q}^{k}: = \mathbb{K}^{k,\v_1}_{\mathrm{soc}}\times\cdots\times\mathbb{K}^{k,\v_{|V|}}_{\mathrm{soc}}.
	\end{equation}
	Introducing the auxiliary variable $\z:=\B\F\q+\d$, we equivalently reformulate \eqref{eq:opt-ddot} as the following SOCP problem
	\begin{equation}
		\label{eq:opt-ddot-soc}
		\min_{\bm{\varphi},\q,\z}\big\{ \langle \bm{\varphi},\bm{c}\rangle_{\mathrm{c},V} \ \big\vert\ \mathcal{A}\bm{\varphi} = \q,\, \B\F\q+\d =\z,\, \T\z\in\mathbb{Q}
		\big\},
	\end{equation}
	where the linear operator $\T$ is defined by
	\begin{equation}
		\label{eq:tvQ}
		\T\z:=\big(
		\T^{\frac{1}{2},\v_1}\z;\ldots;\T^{\frac{1}{2},\v_{|V|}}\z;\T^{\frac{3}{2},\v_1}\z;\ldots;\T^{\frac{3}{2},\v_{|V|}}\z;\ldots;
		\T^{N-\frac{1}{2},\v_{|V|}}\z\big).
	\end{equation}

		\begin{remark}
			The major difference between \eqref{eq:opt-ddot-soc} and the SOCP problem \cite[Eq. (3.14)]{dotsoc} for DOT in flat domains is that the conic constraint here is imposed on $\T\bm z$, instead of on the auxiliary variable $\bm z$ directly.
			The underlying reason is that the simple shifted copy operators are sufficient to decouple the discretized dual DOT problem in \cite{dotsoc} since the staggered grids are sufficiently regular. Yet, the triangular mesh for DOT on geometric surfaces does not admit such regularity.  
			Moreover, the presence of $\T$ prevents a direct application of the KKT analysis and algorithmic implementation in \cite{dotsoc}, which rely on a cone constraint imposed directly on the auxiliary variable $\bm z$. Thus, the structure of \eqref{eq:opt-ddot-soc} should be examined with further theoretical effort.
	\end{remark}
	
	\subsection{Analysis of SOCP reformulation}
	Here, we conduct a theoretical analysis of the SOCP \eqref{eq:opt-ddot-soc} to reveal essential properties crucial for designing and implementing algorithms, particularly regarding efficiency.
	The first result is about the linear operators in the reformulation.
	
	\begin{proposition}
		\label{prop:diag}
		For the linear operators $\F$ and $\B$ in the SOCP problem \eqref{eq:opt-ddot-soc}, the composite operator $\F^*\B^*\B\F$ is diagonal.
	\end{proposition}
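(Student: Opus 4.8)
The plan is to compute $\F^*\B^*\B\F$ explicitly by exploiting the block and Kronecker structure of the two operators and to read off diagonality directly. Since both $\B$ and $\F$ are assembled only from signed identity blocks, the staggering matrices $\mathcal{C}_\pm$, and the repetition vector $\bm{1}_3$, no genuine analytic difficulty arises; the whole argument is a structured bookkeeping computation.

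First I would compute $\B^*\B$. Writing a generic argument as $(\bm u;\bm w)$ with $\bm u\in\mathbb{R}^{|\G^{\text{st}}_{\text{time}}|\times|V|}$ and $\bm w\in\mathbb{R}^{6\times|\G^{\text{st}}_{\text{time}}|\times|T|\times 3}$, the definition of $\B$ gives $\B(\bm u;\bm w)=(-\bm u;\frac{\sqrt 3}{3}\bm w;\bm u)$, whence
\[
\langle \B(\bm u;\bm w),\B(\bm u;\bm w)\rangle = 2\langle \bm u,\bm u\rangle + \tfrac13\langle \bm w,\bm w\rangle .
\]
This identifies $\B^*\B=\diag\!\big(2\I_{|\G^{\text{st}}_{\text{time}}|\times|V|},\tfrac13\I_{6\times|\G^{\text{st}}_{\text{time}}|\times|T|\times 3}\big)$, which is already diagonal: the two copies of $\bm u$ coming from the first and third blocks of $\B$ simply add, while the middle block contributes $(\frac{\sqrt3}{3})^2=\frac13$.

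Next, because $\F=\diag(\I_{|\G^{\text{st}}_{\text{time}}|\times|V|},\widetilde{\F})$ by \eqref{eq:tildef}, the block-diagonal structure is preserved under conjugation and I obtain
\[
\F^*\B^*\B\F=\diag\!\big(2\I_{|\G^{\text{st}}_{\text{time}}|\times|V|},\tfrac13\,\widetilde{\F}^*\widetilde{\F}\big).
\]
It then remains to check that $\widetilde{\F}^*\widetilde{\F}$ is diagonal. Reading off the two row-blocks of $\widetilde{\F}$ from \eqref{eq:tildef} and applying the mixed-product rule for the Kronecker product, I would write $\widetilde{\F}^*\widetilde{\F}=3\big((\mathcal{C}_+^*\mathcal{C}_++\mathcal{C}_-^*\mathcal{C}_-)\otimes\I_{|T|\times 3}\big)$, where the scalar $3$ arises from $\bm{1}_3^\top\bm{1}_3$. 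From \eqref{eq:ftp} the matrices $\mathcal{C}_+^*\mathcal{C}_+$ and $\mathcal{C}_-^*\mathcal{C}_-$ are diagonal selection matrices with diagonals $(1,\dots,1,0)$ and $(0,1,\dots,1)$, so their sum is $\diag(1,2,\dots,2,1)$; tensoring with $\I_{|T|\times 3}$ preserves diagonality, which closes the argument.

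The only point that deserves care — rather than a true obstacle — is the Kronecker bookkeeping: one must note that the column $\bm{1}_3$ collapses to the scalar $\bm{1}_3^\top\bm{1}_3=3$, so that the three copies produced by $\widetilde{\F}$ introduce no off-diagonal coupling, and that no cross terms between the $\mathcal{C}_+$ and $\mathcal{C}_-$ blocks appear precisely because we are forming the Gram operator $\widetilde{\F}^*\widetilde{\F}$ rather than $\widetilde{\F}\widetilde{\F}^*$.
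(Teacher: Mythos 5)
Your proposal is correct and follows essentially the same route as the paper: both reduce the claim to the identity $\mathcal{C}_+^*\mathcal{C}_+ + \mathcal{C}_-^*\mathcal{C}_- = \diag(1,2,\ldots,2,1)$, use the Kronecker (mixed-product) structure of $\widetilde{\F}$ to get $\widetilde{\F}^*\widetilde{\F} = 3\,\diag(1,2,\ldots,2,1)\otimes\I_{|T|\times 3}$, and combine with $\B^*\B=\diag\big(2\I,\tfrac13\I\big)$ and the block-diagonality of $\F$. Your detour through the quadratic form to identify $\B^*\B$ is a minor stylistic variation, not a different argument.
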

	\begin{proof}
		According to the definitions of $\mathcal{C}_-$ and $\mathcal{C}_+$ in \eqref{eq:ftp}, we obtain that $\mathcal{C}_{-}^*\mathcal{C}_{-}+\mathcal{C}_{+}^*\mathcal{C}_{+} = \diag(1,2,\ldots,2,1)$.
		Then, using the basic properties of the Kronecker product \cite[(1.3.1)-(1.3.4)]{golub}, one has from \eqref{eq:tildef} that
		\begin{equation}\nonumber
			\begin{array}{ll}
				\widetilde{\F}^*\widetilde{\F} =  \bm{1}_3^{\top}\bm{1}_3\otimes(\mathcal{C}_{-}^*\mathcal{C}_{-}+\mathcal{C}_{+}^*\mathcal{C}_{+})\otimes \I_{3|T|}^*\I_{3|T|} =3\diag(1,2,\ldots,2,1)
				\otimes\I_{3|T|},
			\end{array}
		\end{equation}
		which is diagonal. Then, by incorporating the definition of $\B$, one gets
		\begin{equation*}
			\begin{array}{ll}
				\F^*\B^*\B\F
				=
				\F^* \diag\left(2\I_{|\fvs|} ,
					\frac{1}{3} \I_{18|\fts|}\right)
				\F = \diag\big(
				2\I_{|\fvs|},
				\frac{1}{3}\widetilde{\F}^*\widetilde{\F}
				\big),
			\end{array}
		\end{equation*}
		which is also a diagonal matrix, and this completes the proof.
	\end{proof}

	\begin{proposition}
		\label{propk:inverse}
		The linear operator $\T$ given by \eqref{eq:tvQ} satisfies
		$\T=\P\D$ and $\T^{-1}=\D^{-1}\P^*$ with $\D$ being diagonal and $\P$ being a permutation matrix, defined by 
		\begin{equation}
			\label{defpd}
			\begin{cases}
				\D:=
				\diag\left(\I_{|\fvs|},\
				\I_2
				\otimes \diag(\D_1,\D_2,\D_3),\
				\I_{|\fvs|}\right),
				\\[1mm]
				\P:=\big(
				\P^{\frac{1}{2},\v_1};\ldots;\P^{\frac{1}{2},\v_{|V|}};
				\P^{\frac{3}{2},\v_1};\ldots;\P^{\frac{3}{2},\v_{|V|}};
				\ldots;
				\P^{N-\frac{1}{2},\v_{|V|}}\big),
			\end{cases}
		\end{equation}
		where  $\P^{k,\v}:=\diag\big(\e_{k,\v}^{\top},\P_{1}^{k,\v},\P_{2}^{k,\v},\P_{3}^{k,\v},\P_{1}^{k,\v},\P_{2}^{k,\v},\P_{3}^{k,\v},\e_{k,\v}^{\top}\big)$.
	\end{proposition}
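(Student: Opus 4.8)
The plan is to establish the factorization $\T=\P\D$ by a direct block-by-block computation, then verify separately that $\D$ is diagonal and that $\P$ is a permutation matrix, from which the formula $\T^{-1}=\D^{-1}\P^*$ follows at once. The whole argument is essentially bookkeeping organized around the identity $\T_i^{k,\v}=\P_i^{k,\v}\D_i$ recorded in \eqref{eq:tvkvi}.

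First I would verify the factorization. I partition the input vector $\z$ according to the block structure of $\D$ in \eqref{defpd}, namely into the two copies of $\mathbb{R}^{|\G^{\text{st}}_{\text{time}}|\times|V|}$ acted on by the identity and the six copies of $\mathbb{R}^{|\G^{\text{st}}_{\text{time}}|\times|T|\times 3}$ acted on by $\D_1,\D_2,\D_3,\D_1,\D_2,\D_3$. Applying $\D$ first rescales each middle block by the corresponding $\D_i$ and leaves the first and last blocks unchanged; applying $\P^{k,\v}$ afterward selects, through its constituents $\e_{k,\v}^{\top}$ and $\P_i^{k,\v}$, exactly the entries that $\T^{k,\v}$ in \eqref{eq:tvkv} produces. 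Matching blocks and using $\P_i^{k,\v}\D_i=\T_i^{k,\v}$ gives $\P^{k,\v}\D\z=\T^{k,\v}\z$ for every $(k/N,\v)\in\G^{\text{st}}_{\text{time}}\times V$, and stacking these over all $(k,\v)$ as in \eqref{eq:tvQ} and \eqref{defpd} yields $\T=\P\D$.

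That $\D$ is diagonal is immediate, since each $\D_i$ in \eqref{defD} is diagonal with positive entries $\sqrt{|\f|/|\v_i^{\f}|}$ and the other blocks are identities; in particular $\D$ is invertible. The substantive step is to show that $\P$ is a permutation matrix. Each constituent $\e_{k,\v}^{\top}$ and $\P_i^{k,\v}$ (see \eqref{eq:pvi}) is a $0$--$1$ selection operator, so $\P$ has entries in $\{0,1\}$, and it suffices to prove that $\P$ is square and that each input coordinate is selected exactly once. A dimension count shows that both the domain and the codomain of $\T$ have dimension $|\G^{\text{st}}_{\text{time}}|(2|V|+18|T|)$, using $\sum_{\v\in V}|T_{\v}|=3|T|$, so $\P$ is square. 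For the bijectivity of the selection, the key is the partition property \eqref{propertytvi}: for each fixed $i\in\{1,2,3\}$ the sets $\{T_{\v,i}\}_{\v\in V}$ are pairwise disjoint and cover $T$, since every triangle $\f$ has a unique $i$-th vertex $\v_i^{\f}$ and hence lies in exactly one $T_{\v,i}$. Consequently, as $(k,\v)$ ranges over $\G^{\text{st}}_{\text{time}}\times V$, the operators $\P_i^{k,\v}$ select each coordinate of the corresponding block $\mathbb{R}^{|\G^{\text{st}}_{\text{time}}|\times|T|\times 3}$ exactly once, and likewise the $\e_{k,\v}^{\top}$ exhaust each coordinate of the two $\mathbb{R}^{|\G^{\text{st}}_{\text{time}}|\times|V|}$ blocks exactly once. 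Hence $\P$ has exactly one nonzero entry, equal to $1$, in each row and each column, i.e., $\P$ is a permutation matrix.

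Finally, since a permutation matrix satisfies $\P^{-1}=\P^*$ and $\D$ is an invertible diagonal matrix, the factorization $\T=\P\D$ gives $\T^{-1}=\D^{-1}\P^{-1}=\D^{-1}\P^*$, completing the proof. I expect the main obstacle to be the careful bookkeeping in the bijectivity argument for $\P$: one must correctly match the output ordering, indexed first by $(k,\v)$ and then by the internal structure of $\T^{k,\v}$, against the input ordering, indexed by block and then by $(k,\f)$, and invoke the partition \eqref{propertytvi} separately for each of the six middle blocks as well as for the two $\e_{k,\v}^{\top}$ blocks.
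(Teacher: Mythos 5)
Your proposal is correct and follows essentially the same route as the paper's proof: factor $\T^{k,\v}=\P^{k,\v}\D$ from the identity $\T_i^{k,\v}=\P_i^{k,\v}\D_i$, use the dimension count and the partition property \eqref{propertytvi} to conclude that $\P$ is a square permutation matrix, and invert via $\T^{-1}=\D^{-1}\P^*$. You simply spell out in more detail the bijectivity bookkeeping (each coordinate selected exactly once per block) that the paper leaves implicit.
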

	
	\begin{proof}
			Given $\z\in \mathbb{R}^{\fvs} \times (\mathbb{R}^{3\times{\fts}})^6 \times\mathbb{R}^{\fvs}$,
			by the definitions of $\T_i^{k,\v}$ and $\P_i^{k,\v}$ in \eqref{eq:pvi} and \eqref{eq:tvkvi}, for $i = 1,2,3,$ one has $\T_i^{k,\v} = \P^{k,\v}_i \D_i $.
			Therefore, by \eqref{eq:tvkv}, for each $(k,\v)\in\fvs$, we obtain $\T^{k,\v}=\P^{k,\v}\D$. Then by \eqref{eq:tvQ}, we know that $\T = \P\D$.
			
			For each $(k,\v)\in \fvs$, the first and last diagonal blocks of $\P^{k,\v}$ are both $\e_{k,\v}^{\top}$. For the six middle blocks, fix $i = 1,2,3$. 
			By \eqref{propertytvi}, the sets $(T_{\v,i})_{\v\in V}$ form a disjoint partition of $T$. 
			Hence, by \eqref{eq:pvi}, for each fixed $k\in \G^{\rm st}_{\rm time}$, the operators $\P^{k,\v}_{i}$, $\v\in V$, selects each triangle component $\bm D_{k,\f}\in\mathbb{R}^3$ with $\f\in T$ exactly once. 
			It follows that every row and every column of $\P$ has exactly one nonzero entry, equal to one. Therefore, $\P$ is a permutation matrix.
		Note that $\D$ is nonsingular, so that $\T^{-1}=\D^{-1}\P^*$, which completes the proof.
	\end{proof}
	
	Next, we focus on the KKT system of the SOCP reformulation \eqref{eq:opt-ddot-soc} and establish its relationship to \eqref{eq:ddot-kkt}.
	Denote the dual variable corresponding to the equality constraint $\A\bm{\varphi}= \q$ by $\bm{\alpha}:=(\bm{\alpha}_1;\bm{\alpha}_2)\in\mathbb{R}^{\fvs}\times \mathbb{R}^{3\times{\ftc}}$ (where $\q \equiv (\bm A; \bm B)$). The dual variable associated with the equality constraint $\z = \B\F\q+\d$ is denoted by $\bm{\beta}:=(\bm{\beta}_{1};\bm{\beta}_{2};\bm{\beta}_{3})\in\mathbb{R}^{\fvs}\times (\mathbb{R}^{3\times{\fts}})^6 \times \mathbb{R}^{\fvs}$,
	where $\bm\beta_2 := (\bm\beta_{2,1};\ldots ;\bm\beta_{2,6})\in(\mathbb{R}^{3\times{\fts}})^6$
	with $\bm\beta_{2,i}\in \mathbb{R}^{3\times{\fts}}$ for $i=1,\ldots,6$.
	Similarly, we write $\z=(\z_{1};\z_{2};\z_{3})$ with $\z_2=(\z_{2,1};\ldots ;\z_{2,6})$.
	The Lagrangian function for \eqref{eq:opt-ddot-soc} is then given by
	\begin{equation*}
		\mathfrak{L}(\bm{\varphi},\z,\q;\bm{\alpha},\bm{\beta}) := \langle \bm\varphi,\bm{c}\rangle_{\mathrm{c},V} + \delta_{\mathbb{Q}}(\T\z)+ \langle \bm{\alpha},\mathcal{A}\bm{\varphi}-\q\rangle_{q} +\langle \bm{\beta},\z-\mathcal{B}\mathcal{F}\q-\d\rangle_z,
	\end{equation*}
	where, for
		$\bm{\alpha},\bm{\alpha}'\in \mathbb{R}^{\fvs}\times \mathbb{R}^{3\times{\ftc}}$
		and
		$\bm{\beta},\bm{\beta}'\in \mathbb{R}^{\fvs}\times(\mathbb{R}^{3\times{\fts}})^6\times\mathbb{R}^{\fvs}$, the inner products are defined by
	\begin{equation}
		\label{eq:innerqz}
		\begin{cases}
			\langle \bm{\alpha},\bm{\alpha}'\rangle_{q}
			:= \langle {\bm{\alpha}}_1, {\bm{\alpha}}'_1\rangle_{\mathrm{st},V}
			+ \langle {\bm{\alpha}}_2,  {\bm{\alpha}}'_2\rangle_{\mathrm{c},T},
			\\
			\langle {\bm{\beta}},{\bm{\beta}'}\rangle_z:
			=\langle  {\bm{\beta}}_{1},  {\bm{\beta}}'_{1}\rangle_{\mathrm{st},V}
			+\sum_{i=1}^6\langle {\bm{\beta}}_{2,i},{\bm{\beta}}'_{2,i}\rangle_{\mathrm{st},T}
			+\langle {\bm{\beta}}_{3}, {\bm{\beta}}'_{3}\rangle_{\mathrm{st},V},
		\end{cases}
	\end{equation}
	where the inner products on the right-hand sides are defined in \eqref{eq:inner-tv}.
	Based on the Lagrangian function above, the KKT system of \eqref{eq:opt-ddot-soc} is given by
	\begin{equation}
		\label{eq:ddot-kkt-soc}
		\begin{cases}
			\A\bm{\varphi}=\q,\ \B\F\q+\d=\z, \\
			\W_{\mathrm{c},V}\bm{c}+\A^*(\W_{\mathrm{st},V}\bm\alpha_1;\W_{\mathrm{c},T}\bm\alpha_2)=0,\\
			(\W_{\mathrm{st},V}\bm\alpha_1;\W_{\mathrm{c},T}\bm\alpha_2)+\F^*\B^*\diag
			(\W_{\mathrm{st},V},\I_6\otimes \W_{\mathrm{st},T},\W_{\mathrm{st},V})\bm{\beta}=0,\\
			\T\z\in \mathbb{Q},\
			\diag (\W_{\mathrm{st},V},\I_6\otimes \W_{\mathrm{st},T},\W_{\mathrm{st},V})\bm{\beta}\in\T^* \mathbb{Q},\
			\langle \bm{\beta},\z\rangle_z=0,
		\end{cases}
	\end{equation}
	where the weight matrices $\W_{\mathrm{c},V},\W_{\mathrm{st},V},\W_{\mathrm{c},T},\W_{\mathrm{st},T}$ are defined by \eqref{eq:weighted-matrix}, and $\T^*$ is the Euclidean adjoint of $\T$.
	The following result ensures that the solution set for the KKT system \eqref{eq:ddot-kkt-soc} is not empty. Moreover, it guarantees that one can recover a solution to the KKT system \eqref{eq:ddot-kkt} from a solution of the KKT system \eqref{eq:ddot-kkt-soc}.
	\begin{theorem}
		\label{prop:soc-kkt}
		The solution set to the KKT system \eqref{eq:ddot-kkt-soc} is not empty.
		Moreover, for any solution $(\bm{\varphi};\q;\z;\bm{\alpha};\bm{\beta})$ to \eqref{eq:ddot-kkt-soc}, the subvector $(\bm{\varphi};\q;\bm{\alpha}_1;\bm{\alpha}_2;-\bm\alpha_1)$ is a solution to the KKT system \eqref{eq:ddot-kkt}.
	\end{theorem}

	\begin{proof}
		Since \eqref{eq:opt-ddot-soc} is an equivalent reformulation of \eqref{eq:opt-ddot}, the two problems have the same optimal value.
		According to \Cref{prop:dotkkt}, the optimal value of \eqref{eq:opt-ddot-soc} is both finite and achievable.
		Moreover, from \eqref{eq:tildep} and \eqref{eqsoc} one can see that any strictly feasible point $(\bm\varphi;\q)$ for \eqref{eq:opt-ddot} can be extended to a strictly feasible point $(\bm\varphi;\q;\z)$ for \eqref{eq:opt-ddot-soc} with $\z: = \B\F\q+\d$.
		Thus, by \cite[Theorem 28.2]{rock-1}, the KKT system \eqref{eq:ddot-kkt-soc} has a nonempty solution set.
		In the following, we fix $(\bm{\varphi};\q;\z;\bm{\alpha};\bm{\beta})$ as a solution to \eqref{eq:ddot-kkt-soc} and define $\bm{\lambda}_0:=\bm{\alpha}_1$, $\bm{\bar\lambda}:=\bm{\alpha}_2$, and $\bm{\lambda}_q:=-\bm{\alpha}_1$.
		
		It is easy to see that the first two lines of \eqref{eq:ddot-kkt} hold according to the first two lines of \eqref{eq:ddot-kkt-soc}.
		Moreover, since $\T$ is invertible by \Cref{propk:inverse}, one has $\u:=(\T^*)^{-1}\diag
		(\W_{\mathrm{st},V},\I_6\otimes \W_{\mathrm{st},T},\W_{\mathrm{st},V})\bm{\beta}
		\in\mathbb{Q} $ and
		$\langle \u,\T\z\rangle=\langle\T^*\u,\z\rangle=\langle  \bm{\beta},\z\rangle_{z} = 0$.
		Consequently, it holds that
		\begin{equation}
			\label{eq:comple}
			\mathbb{K}_{\mathrm{soc}}^{k,\v}\ni \u^{k,\v} \perp \T^{k,\v}\z\in\mathbb{K}_{\mathrm{soc}}^{k,\v}
			\qquad\forall \, (k,\v)\in \fvs.
		\end{equation}
		Therefore, by \eqref{eqsoc} one has $\bm A+ \frac{1}{2} (\L_{t}(\mathcal{L}_{s}^*(\vertiii{\bm{B}}^2)))\le 0$.
		Fix $(k,\v) \in \fvs$.
		The relation $\z=\B\F\q+\d$ implies $(\z_{1})_{k,\v} = 1-A_{k,\v}$ and $(\z_{3})_{k,\v} = 1+A_{k,\v}$, which ensures that the vector $\T^{k,\v}\z$ is always nonzero by \eqref{eq:tvkv}.
		Let $u^{k,\v}_{0}$ be the first element of $\u^{k,\v}$, ${{\overline\u}^{k,\v}}$ be the vector consisting of the remaining elements of $\u^{k,\v}$, and  $u^{k,\v}_{1+6|T_{\v}|}$ be the last element of $\u^{k,\v}$.
		By \Cref{propk:inverse} and the fact that $\P$ is a permutation matrix, $(\T^*)^{-1}=(\D^*\P^*)^{-1}=\P\D^{-1}$.
		Then by \eqref{eq:weighted-matrix} and \eqref{defpd} one can see that
		\begin{equation}
			\label{equbeta}
			\begin{array}{ll}
				u^{k,\v}_0 = (\W_{\mathrm{st},V}\bm\beta_1)_{k,\v}=\frac{|\v|}{N}(\bm{\beta_1})_{k,\v}
				\quad\mbox{and}\quad
				u^{k,\v}_{1+6|T_{\v}|} = (\W_{\mathrm{st},V}\bm\beta_3)_{k,\v}
				=\frac{|\v|}{N}(\bm{\beta_3})_{k,\v}.
			\end{array}
		\end{equation}
		Since $\u^{k,\v}\in \mathbb{K}^{k,\v}_{\text{soc}}$, according to the third line of \eqref{eq:ddot-kkt-soc} and the definitions of $\F$ and $\B$, one has
		$(\bm{\alpha}_1)_{k,\v} = (\bm{\beta}_{1}-\bm{\beta}_{3})_{k,\v}\ge 0$, so that $\bm{\lambda}_0\ge 0$.
		In the following, we validate the complementarity condition in \eqref{eq:ddot-kkt}.
		Note that if $(\bm\alpha_1)_{k,\v}>0$, one has $\u^{k,\v}\neq 0$ by \eqref{equbeta}.
		Then by \eqref{eq:comple} we know that both $\u^{k,\v}$ and $\T^{k,\v}\z$ lie on the boundary of the second-order cone $\mathbb{K}^{k,\v}_{\mathrm{soc}}$.
		Thus, we know from \eqref{eqsoc} and \Cref{rmkbdry} that
		$A_{k,\v} + \frac{1}{2} (\L_{t}(\mathcal{L}_{s}^*\vertiii{\bm{B}}^2))_{k,\v} =0$.
		Consequently, the third line of \eqref{eq:ddot-kkt} also holds.

		Recall that $\T^{k,\v}\z$ is nonzero.
		If  $\u^{k,\v}$ is also nonzero (when $(\bm{\alpha}_1)_{k,\v}>0$ or
		$(\bm{\alpha}_1)_{k,\v} = 0$ but $(\bm{\beta}_{1})_{k,\v}=(\bm{\beta}_{3})_{k,\v}\neq0$), by \eqref{eq:comple} and \cite[Lemma 15]{soc} we know that \begin{equation}
			\label{eq:soc-z-beta}
			\T^{k,\v}\z = \kappa_{k,\v} \big(u^{k,\v}_{0};-\overline{\u}^{k,\v}\big)
			\quad \mbox{for a constant } \kappa_{k,\v}>0,
		\end{equation}
		which implies by \eqref{eq:tvkv} that
		\begin{equation}
			\label{eq:zfst-zlst}
			\begin{cases}
				1-A_{k,\v} =  (\z_{1})_{k,\v} =\kappa_{k,\v}u^{k,\v}_{0} =\kappa_{k,\v}(\W_{\mathrm{st},V}\bm{\beta}_{1})_{k,\v}, \\[0.2em]
				1+A_{k,\v} =(\z_{3})_{k,\v} =-\kappa_{k,\v}u^{k,\v}_{1+6|T_{\v}|}= -\kappa_{k,\v}(\W_{\mathrm{st},V}\bm{\beta}_{3})_{k,\v}.
			\end{cases}
		\end{equation}
		Summing the two equalities in \eqref{eq:zfst-zlst} implies that
		\begin{equation}
			\begin{array}{ll}
				\label{eq:alpha-eta}
				(\bm{\alpha}_1)_{k,\v} = (\bm{\beta}_{1}-\bm{\beta}_{3})_{k,\v} = \frac{2N}{|\v|\kappa_{k,\v}}.
			\end{array}
		\end{equation}
		Therefore, $(\bm{\alpha}_1)_{k,\v}=0$ implies that $(\bm{\beta}_{1})_{k,\v}=(\bm{\beta}_{3})_{k,\v}= 0$, hence $\bm{u}^{k,\v}=0$ by \eqref{equbeta}.
		Thus, \eqref{eq:soc-z-beta} holds only for the case that $(\bm{\alpha}_1)_{k,\v} > 0$.
		As a result, we obtain that
		\begin{equation}
			\label{relationutz}
			\big(u^{k,\v}_{0};-\overline{\u}^{k,\v}\big)=\tilde\kappa_{k,\v} \T^{k,\v}\z
			\quad\mbox{with}\quad
			\tilde{\kappa}_{k,\v}\eqdef \begin{cases}
				1/\kappa_{k,\v} &  \text{if } (\bm{\alpha}_1)_{k,\v} >0,\\
				0           & \text{if } (\bm{\alpha}_1)_{k,\v} = 0,
			\end{cases}
		\end{equation}
		where $\kappa_{k,\v}>0$ is given by \eqref{eq:soc-z-beta}.
		Taking $\u=(\T^*)^{-1}\diag
		(\W_{\mathrm{st},V},\I_6\otimes \W_{\mathrm{st},T},\W_{\mathrm{st},V})\bm{\beta}$
		and $\z=\B\F\q+\d$ into \eqref{relationutz} and using \Cref{propk:inverse} yields, for $j=1,2,3,$
		\begin{equation*}
			\begin{cases}
				(\W_{\mathrm{st},T}\bm{\beta}_{2,j})_{k,\f} = -\frac{|\f|}{|\v_j^{\f}|}\tilde{\kappa}_{k,\v_j^{\f}}(\z_{2,j})_{k,\f}=-\frac{\sqrt{3}|\f|}{3|\v_j^{\f}|}\tilde{\kappa}_{k,\v_j^{\f}}\bm{B}_{k-\frac{1}{2},\f},  
				\\
				(\W_{\mathrm{st},T}\bm{\beta}_{2,j+3})_{k,\f} = -\frac{|\f|}{|\v_j^{\f}|}\tilde{\kappa}_{k,\v_j^{\f}}(\z_{2,j+3})_{k,\f}=-\frac{\sqrt{3}|\f|}{3|\v_j^{\f}|}\tilde{\kappa}_{k,\v_j^{\f}}\bm{B}_{k+\frac{1}{2},\f},
			\end{cases}
		\end{equation*}
		where $\v_1^{\f},\v_2^{\f}$, and $\v_3^{\f}$ are vertices of the given triangle $\f\in T$.
		Note that the third line of \eqref{eq:ddot-kkt-soc} implies
		$\bm{\alpha}_2 = -\frac{\sqrt{3}}{3}\W_{\mathrm{c},T}^{-1}\widetilde{\F}^*(\W_{\mathrm{st},T}\bm{\beta}_{2,1};\ldots;\W_{\mathrm{st},T}\bm{\beta}_{2,6})$.
		Then by using the definition of $ \widetilde{\F}$ in \eqref{eq:tildef}, for $(k,\f)\in \ftc$, one can get from the above equality that
		\begin{equation*}
			(\bm{\alpha}_2)_{k,\f} =
			\begin{cases}
				\Big(\sum\limits_{j=1}^3 \frac{N}{3|\v^{\f}_j|}\tilde{\kappa}_{\frac{1}{2},\v^{\f}_j}\Big) \bm{B}_{0,\f} & \mbox{ if }k = 0,\\[0.2em]
				\Big(\sum\limits_{j=1}^3  \frac{N}{3|\v^{\f}_j|}\tilde{\kappa}_{N-\frac{1}{2},\v^{\f}_j}\Big)\bm{B}_{N,\f} & \mbox{ if }k = N,\\[0.2em]
				\Big(\sum\limits_{j=1}^3 \frac{N}{3|\v^{\f}_j|} \big(\tilde{\kappa}_{k-\frac{1}{2},\v^{\f}_j}+\tilde{\kappa}_{k+\frac{1}{2},\v^{\f}_j}\big)\Big) \bm{B}_{k,\f} & \mbox{ otherwise.}
			\end{cases}
		\end{equation*}
		On the other hand, it follows from \eqref{eq:alpha-eta} and \eqref{relationutz} that
		\begin{equation}\nonumber
			\big((\L_{s}\L^*_{t}\bm{\alpha}_1)\otimes \bm{1}_3\big)\odot \bm{B} = \Big(\big(\L_{s}\L^*_{t}2\W_{\mathrm{st},V}^{-1} (\tilde\kappa_{\frac{1}{2},\v_1};\ldots; \tilde\kappa_{\frac{1}{2},\v_{|V|}};\ldots; \tilde\kappa_{N-\frac{1}{2},\v_{|V|}})\big)\otimes \bm{1}_3\Big)\odot\bm{B}.
		\end{equation}
		Then by using the definitions of $\L_{s}$ and $\L_t$ in \eqref{eq:li}, for $(k,\f)\in \ftc$, we get
		\begin{equation*}
			\Big(\big((\L_{s}\L^*_{t}\bm{\alpha}_1)\otimes \bm{1}_3\big)\odot \bm{B}\Big)_{k,\f} =
			\begin{cases}
				\Big(\sum\limits_{\v\in V_{\f}} \frac{N}{3|\v|}\tilde{\kappa}_{\frac{1}{2},\v}\Big) \bm{B}_{0,\f} & \mbox{ if }k = 0,
				\\[0.2em]
				\Big(\sum\limits_{\v\in V_{\f}}  \frac{N}{3|\v|}\tilde{\kappa}_{N-\frac{1}{2},\v}\Big) \bm{B}_{N,\f} & \mbox{ if }k = N,
				\\[0.2em]
				\Big(\sum\limits_{\v\in V_{\f}} \frac{N}{3|\v|} \big(\tilde{\kappa}_{k-\frac{1}{2},\v}+\tilde{\kappa}_{k+\frac{1}{2},\v}\big )\Big) \bm{B}_{k,\f} & \mbox{ otherwise}.
			\end{cases}
		\end{equation*}
		Since $V_{\f} = \{\v_1^{\f},\v_2^{\f},\v_3^{\f}\}$, it follows that $\bm{\alpha}_2 = \big((\L_{s}\L^*_{t}\bm{\alpha}_1)\otimes \bm{1}_3\big)\odot \bm{B}$.
		Consequently, the last line of \eqref{eq:ddot-kkt} holds, which completes the proof.
	\end{proof}

	\section{An inexact proximal ALM for SOCP reformulation}
	\label{sec:alg}
	This section introduces an inexact decomposition-based proximal ALM for solving the SOCP reformulation \eqref{eq:opt-ddot-soc} with highly efficient implementations for the corresponding subproblems.
	The augmented Lagrangian function of \eqref{eq:opt-ddot-soc} is given by
	\begin{equation}\nonumber
		\begin{array}{ll}
			\mathfrak{L}_{\sigma}(\bm{\varphi},\z,\q;\bm{\alpha},\bm{\beta}):= \langle \bm{c}, \bm{\varphi}\rangle_{\mathrm{c},V} + \delta_{\mathbb{Q}}(\T\z)+\langle\mathcal{A}\bm{\varphi}-\q,\bm{\alpha}\rangle_{q}+\langle \z-\B\mathcal{F}\q-\d,\bm{\beta} \rangle_{z} 
			\\[1mm]
			\hspace{8.5em} 
			+ \frac{\sigma}{2}\|\mathcal{A}\bm{\varphi}-\q\|_q^2 + \frac{\sigma}{2}\|\z-\B\mathcal{F}\q-\d\|_z^2,
		\end{array}
	\end{equation}
	where $\|\cdot\|_q$ and $\|\cdot\|_z$ denote the norms induced by the inner products $\langle\cdot,\cdot\rangle_q$ and $\langle\cdot,\cdot\rangle_z$ defined by \eqref{eq:innerqz}.
	Note that the discrete gradient operator $\A$ has a nontrivial kernel.
	Thus, the linear operator $\A^*\A$ is singular. 
	Consequently, minimizing the augmented Lagrangian function with respect to $\bm{\varphi}$ may yield an unbounded solution set, thereby complicating the implementation.
	To address this issue, we further restrict the domain of $\bm{\varphi}$ in \eqref{eq:opt-ddot-soc}.
	Specifically, 
		from \eqref{feaseq} in the proof of \Cref{prop:dotkkt}, one can see that $
		\W_{\mathrm{c},V}\bm c\in\rge(\A^*)$, ensuring that 
		$
		\langle
		\bm{\varphi},
		\bm c
		\rangle_{\mathrm{c},V}
		=
		\langle 
		\Pi_{\rge(\A^*)}(\bm{\varphi}),
		\bm c
		\rangle_{\mathrm{c},V}
		$
		for any $\bm{\varphi}\in\mathbb{R}^{\fvc}$
		due to \eqref{eq:inner-tv}. 
		This, together with the fact that
		$\A\bm{\varphi}=
		\A(\mathrm{\Pi}_{\rge(\A^*)}\bm{\varphi})
		$, 
		implies that $\mathrm{\Pi}_{\rge(\A^*)}\bm{\varphi}$ preserves both the equality constraint involving $\A\bm{\varphi}$ and the objective value.
	Therefore, given a solution $(\bm{\varphi};\q; \z; \bm{\alpha}; \bm{\beta})$ to the KKT system \eqref{eq:ddot-kkt-soc}, it follows that 
	$(\mathrm{\Pi}_{\rge(\A^*)}(\bm{\varphi});\q;\z;\bm{\alpha};\bm{\beta})$ is a solution to both the KKT system \eqref{eq:ddot-kkt-soc} and the KKT system of the following problem
	\begin{equation}
		\label{eq:opt-ddot-soc-re}
		\min_{\bm{\varphi},\q,\z}
		\big\{
		\langle \bm\varphi,\bm{c}\rangle_{\mathrm{c},V} \big\vert  \mathcal{A}\bm{\varphi} = \q,\, \B\F\q+\d =\z,\, \T\z\in\mathbb{Q},\, \bm\varphi\in\rge(\A^*)
		\big\}.
	\end{equation}
	Thus, the KKT system for \eqref{eq:opt-ddot-soc-re} admits a nonempty solution set that is contained in the solution set of \eqref{eq:ddot-kkt-soc}.
	Consequently, we use \eqref{eq:opt-ddot-soc-re} to implement the inexact proximal ALM algorithm from \cite{cl21}, presented in \Cref{alg:socinpalm},
	and the corresponding convergence theorem follows.

	\begin{algorithm}
		\caption{An inexact proximal ALM for solving \eqref{eq:opt-ddot-soc-re}}
		\label{alg:socinpalm}
		\small
		\begin{algorithmic}[1]
			\REQUIRE {A penalty parameter $\sigma > 0$, a dual step length $\tau \in (0, 2)$, and the initial points
				$\bm{q}^{(0)}\in \mathbb{R}^{\fvs}\times \mathbb{R}^{3\times{\ftc}}$,
					$\bm{\alpha}^{(0)}\in \mathbb{R}^{\fvs}\times \mathbb{R}^{3\times{\ftc}}$, and
					$\bm{\beta}^{(0)} \in\mathbb{R}^{\fvs}\times(\mathbb{R}^{3\times{\fts}})^6\times \mathbb{R}^{\fvs}$.
			}
			\ENSURE {The infinite sequence $\{(\bm{\varphi}^{(l)}; \bm{z}^{(l)}; \bm{q}^{(l)};\bm{\alpha}^{(l)}; \bm{\beta}^{(l)})\}$}.
			\FOR{$l=0,1,\ldots$}
			\STATE $(\bm{\varphi}^{(l+1)}, \bm{z}^{(l+1)}) \gets \displaystyle\argmin_{\bm{\varphi}, \bm{z}} \big\{ \mathfrak{L}_{\sigma}(\bm{\varphi}, \bm{z}, \bm{q}^{(l)}; \bm{\alpha}^{(l)}, \bm{\beta}^{(l)}) \mid  \bm{\varphi}\in \rge(\A^*) \big\}$.
			\\[0.2em]
			\STATE $\bm{q}^{(l+1)} \gets \displaystyle\argmin_{\bm{q}} \big\{ \mathfrak{L}_{\sigma}(\bm{\varphi}^{(l+1)}, \bm{z}^{(l+1)}, \bm{q}; \bm{\alpha}^{(l)}, \bm{\beta}^{(l)}) \big\}$.
			\\[0.2em]
			\STATE $\bm{\alpha}^{(l+1)} \gets \bm{\alpha}^{(l)} + \tau\sigma \big(\A\,\bm{\varphi}^{(l+1)} - \bm{q}^{(l+1)}\big)$.
			\\[0.2em]
			\STATE $\bm{\beta}^{(l+1)} \gets \bm{\beta}^{(l)} + \tau\sigma \big(\bm{z}^{(l+1)} - \B\F\,\bm{q}^{(l+1)} - \d\big)$.
			\ENDFOR
		\end{algorithmic}
	\end{algorithm}

	\begin{theorem}
		The whole sequence $\{(\bm{\varphi}^{(l)};\z^{(l)};\q^{(l)};\bm{\alpha}^{(l)};\bm{\beta}^{(l)})\}$ generated by \Cref{alg:socinpalm} converges to a solution of the KKT system \eqref{eq:ddot-kkt-soc}.
	\end{theorem}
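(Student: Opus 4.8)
The plan is to exhibit \cref{alg:socinpalm} as a special instance of the inexact semi-proximal ALM of \cite{cl21} and then to verify, one by one, the hypotheses of \cite[Theorem~4.2]{cl21}, after which the asserted whole-sequence convergence follows directly. First I would recast \eqref{eq:opt-ddot-soc-re} in the two-block template underlying \cite{cl21}: treat $(\bm\varphi,\z)$ as the first primal block, carrying the closed proper convex terms $\langle\cdot,\bm{c}\rangle_{t,V}+\delta_{\rge(\A^*)}$ (in $\bm\varphi$) and $\delta_{\mathbb Q}(\T\,\cdot)$ (in $\z$), and $\q$ as the second block, which enters the model only through the two affine couplings $\A\bm\varphi-\q=0$ and $\z-\B\F\q-\d=0$. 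Since $\bm\varphi$ and $\z$ never interact directly, the joint update in Line~2 splits into independent $\bm\varphi$- and $\z$-minimizations, and the sweep $(\bm\varphi,\z)\to\q$ of Lines~2--3 is precisely the forward cycle of the symmetric Gauss--Seidel decomposition of \cite{lxdsgs}; the induced (implicit) sGS proximal term is self-adjoint and positive semidefinite, which is exactly the feature that places the scheme in the semi-proximal ALM framework and permits the dual step length $\tau\in(0,2)$ rather than the smaller range of plain ADMM.

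Second, I would confirm the structural conditions that render the subproblems well posed and the proximal terms admissible. The restriction $\bm\varphi\in\rge(\A^*)$ is decisive: although $\A^*\A$ is singular on the full space, with kernel spanned by the vectors in \eqref{span}, its restriction to $\rge(\A^*)$ is positive definite, so the $\bm\varphi$-subproblem is a uniquely solvable linear system and $\mathfrak{L}_\sigma$ is coercive in $\bm\varphi$ over this subspace. For the $\q$-block, \cref{prop:diag} shows that $\F^*\B^*\B\F$ is diagonal; hence the quadratic form controlling the $\q$-update in Line~3 is diagonal and strictly positive definite, giving a unique closed-form minimizer and supplying the block positive definiteness required by the framework. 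Because every subproblem therefore admits an exact solution, the summable-error (inexactness) conditions of \cite{cl21} hold trivially.

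Third, I would supply the solvability prerequisite of \cite[Theorem~4.2]{cl21}, namely a nonempty KKT solution set. \cref{prop:soc-kkt} gives that the solution set of \eqref{eq:ddot-kkt-soc} is nonempty, and, as observed just before \eqref{eq:opt-ddot-soc-re}, replacing the $\bm\varphi$-component of any such solution by its projection onto $\rge(\A^*)$ yields a KKT point of \eqref{eq:opt-ddot-soc-re}; thus the KKT set of \eqref{eq:opt-ddot-soc-re} is nonempty and contained in that of \eqref{eq:ddot-kkt-soc}. With the closed proper convex objectives, the affine constraints, the admissible semi-proximal (here diagonal or zero) operators, and the nonempty KKT set all in place, \cite[Theorem~4.2]{cl21} delivers convergence of the entire sequence $\{(\bm\varphi^{(l)};\z^{(l)};\q^{(l)};\bm\alpha^{(l)};\bm\beta^{(l)})\}$ to a KKT point of \eqref{eq:opt-ddot-soc-re}, which by the containment just noted solves \eqref{eq:ddot-kkt-soc}.

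The main obstacle I anticipate is the careful bookkeeping of the correspondence between \cref{alg:socinpalm} and the abstract scheme: one must check that the grouping $(\bm\varphi,\z)\to\q$ together with the weighted inner products \eqref{eq:innerqz} realizes exactly the operator splitting of \cite{lxdsgs}, so that the theoretical step-length range and the positive semidefinite proximal terms demanded by \cite[Theorem~4.2]{cl21} are matched without inserting any additional regularization. Once this identification is pinned down, the convex-analytic ingredients --- closedness of the objective pieces, the KKT existence from \cref{prop:soc-kkt}, and the diagonal/positive-definite structure from \cref{prop:diag} --- assemble routinely into the hypotheses of the cited convergence theorem.
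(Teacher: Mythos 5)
Your proposal is correct and follows essentially the same route as the paper, which likewise establishes the theorem by identifying \cref{alg:socinpalm} as an instance of the inexact semi-proximal ALM of \cite{cl21} (via the symmetric Gauss--Seidel decomposition of \cite{lxdsgs}), using \cref{prop:soc-kkt} for the nonempty KKT solution set and the containment of the KKT set of \eqref{eq:opt-ddot-soc-re} in that of \eqref{eq:ddot-kkt-soc}, and then invoking \cite[Theorem~4.2]{cl21}. Your write-up is in fact more detailed than the paper's, which states the theorem with only a one-sentence justification.
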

	\begin{proof}
		Since $\T$ is nonsingular by \Cref{propk:inverse},
		$\delta_{\mathbb Q}(\T\z)$ is a closed proper convex function of $\z$ by \cite[Theorems 5.7 \& 9.5]{rock-1}. 
		Then, we identify Algorithm \ref{alg:socinpalm} with \cite[Algorithm sGS-inPADMM]{cl21} with $s=1$, $(\bm \varphi,\bm z)$ being the first block-variable and $\q$ being the second block-variable. 
		Recall that the KKT solution set of \eqref{eq:opt-ddot-soc-re} is nonempty and is contained in the solution set of \eqref{eq:ddot-kkt-soc}.
		Moreover, the linear operator $\q\to (-\q;-\B\F\q)$ is injective by definition. 
		Then, both Assumptions 4.1 and 4.2 in \cite{cl21} hold. 
		It follows from \cite[Theorem 4.2(e)]{cl21} that the sequence
		$\{(\bm{\varphi}^{(l)};\z^{(l)};\q^{(l)};\bm{\alpha}^{(l)};\bm{\beta}^{(l)})\}$ converges to a KKT solution of \eqref{eq:opt-ddot-soc-re}, and hence to a solution of \eqref{eq:ddot-kkt-soc}.
	\end{proof}

	\begin{remark}
			\Cref{alg:socinpalm} has the same update form as the classic ADMM \cite{glowinski75,gabay76}.
			However, the dual step length $\tau$ can be chosen from the interval $(0,2)$, whereas the classical ADMM corresponds to $\tau \in (0,\frac{1+\sqrt{5}}{2})$.
			This feature places \Cref{alg:socinpalm} within the framework of inexact proximal ALM based on the symmetric Gauss-Seidel decomposition \cite{lxdsgs}.
		The numerical experiments reported in \cite{cl21} show that larger dual step lengths, such as $\tau=1.9$, can improve numerical efficiency in practice, motivating the use of \Cref{alg:socinpalm} for solving \eqref{eq:opt-ddot-soc-re}. 
	\end{remark}

	\subsection{Implementation details}
	We specify how to solve the subproblems in \Cref{alg:socinpalm} in a computationally economical way.
	Note that the augmented Lagrangian function $\mathfrak{L}_{\sigma}$ is separable in $\bm{\varphi}$ and $\z$, so that the subproblems involving the variables $(\bm{\varphi},\z)$ in \Cref{alg:socinpalm} can be solved independently. In particular, each $\bm{\varphi}^{(l+1)}$ is obtained by solving the linear system
	\begin{equation}
		\label{eq:step-phi}
		(\A_t^*\W_{\mathrm{st},V}\A_t+\A_s^*\W_{\mathrm{c},T}\A_s)\bm{\varphi} = (\A_t^*\W_{\mathrm{st},V}, \A_s^*\W_{\mathrm{c},T}) (\q^{(l)}-\bm{\alpha}^{(l)}/\sigma)-\W_{\mathrm{c},V}\bm{c}/\sigma.
	\end{equation}
	The definition of $\bm{c}$ in \eqref{eq:opt-ddot} ensures that $\W_{\mathrm{c},V}\bm{c}\in\rge(\A^*)$. Consequently, the right-hand side of this linear system also lies in $\rge(\A^*)$. This ensures that the linear system above admits a solution in $\bm\varphi\in\rge(\A^*)$.
	Given the invariance of the linear operator $\A_t^*\W_{\mathrm{st},V}\A_t+\A_s^*\W_{\mathrm{c},T}\A_s$ between iterations, we decouple the full spatial-temporal system \eqref{eq:step-phi} by a spectral decomposition in time, producing a collection of independent surface Poisson systems for which the LU factorizations are precomputed. This efficient technique is consistent with that used in  \cite{hugo18}.
	
	Meanwhile, it is easy to see that
	\begin{equation}
		\label{eq:updatez}
		\z^{(l+1)}=\argmin_{\z}\big \{ \delta_{\mathbb{Q}}(\T\z) + \tfrac{\sigma}{2}\|\z-\x^{(l)}\|^2_{z}\big\},
	\end{equation}
	where $\x^{(l)} := \mathcal{B}\mathcal{F}\q^{(l)}+\d-\bm{\beta}^{(l)}/\sigma$.
	Let $\y: = \T\z$. According to \Cref{propk:inverse},
	one has $\z = \D^{-1}\P^* \y$. Then, \eqref{eq:updatez} can be equivalently transformed  into the following problem
	\begin{equation}
		\label{eq:updateq}
		\min_{\y}\big\{ \delta_{\mathbb{Q}}(\y) + \tfrac{\sigma}{2}\|\y-\T\x^{(l)}\|^2_{\P\W\P^*}
		\big\},
	\end{equation}
	where $\|\cdot\|_{\P\W\P^*}$ is a weighted norm with the weighting matrix $\P\W\P^*$, and
	\begin{equation}
		\label{eq:weight}
		\begin{array}{ll}
			\W
			&=\D^{-1}\diag \big(\W_{\mathrm{st},V},\I_6\otimes \W_{\mathrm{st},T},\W_{\mathrm{st},V}\big)\D^{-1}\\[1mm]
			&=\diag\big(\W_{\mathrm{st},V},
			(\I_2\otimes\diag(\D_1,\D_2,\D_3))^{-1}\\
			&\hspace{9em}
			(\I_6\otimes \W_{\mathrm{st},T})
			(\I_2\otimes\diag(\D_1,\D_2,\D_3))^{-1},
			\W_{\mathrm{st},V}\big)\\[1mm]
			&=\diag\big(\W_{\mathrm{st},V},\I_2\otimes
			\diag(\D_1^{-2}\W_{\mathrm{st},T},
			\D_2^{-2}\W_{\mathrm{st},T},
			\D_3^{-2}\W_{\mathrm{st},T}),
			\W_{\mathrm{st},V}\big).
		\end{array}
	\end{equation}
	Specifically, by the definition of $\mathbb{Q}$ in \eqref{eq:setQ}, the optimal solution to problem \eqref{eq:updateq} can be obtained by solving separate problems
	\begin{equation}
		\label{eq:updatesoc}
		\min_{\y^{k,\v}} \big\{\delta_{\mathbb{K}^{k,\v}_{\text{soc}}}(\y^{k,\v}) + \tfrac{\sigma}{2}\|\y^{k,\v}-\T^{k,\v}\x^{(l)}\|^2_{\P^{k,\v}\W(\P^{k,\v})^*}\big\},
		\quad (k,\v) \in \fvs.
	\end{equation}
	By \eqref{eq:weight}, \eqref{defD}, and \Cref{propk:inverse}, one has $\P^{k,\v}\W(\P^{k,\v})^*=\tfrac{|\v|}{N}\I_{2 + 6|T_{\v}|}$. Consequently, \eqref{eq:updatesoc} is solved by the Euclidean projection $(\y^{(l+1)})^{k,\v}=\Pi_{\mathbb{K}_{\mathrm{soc}}^{k,\v}}\big(\T^{k,\v}\x^{(l)}\big)$, which admits a closed-form solution with a very low complexity (linear in the dimension) \cite[Proposition 3.3]{fukushima},
	and $\z^{(l+1)} := \D^{-1}\P^*\y^{(l+1)}$.
	
	Finally, the update for $\q^{(l+1)}$ is obtained by solving an extremely simple linear system with the coefficient matrix being given by
	\begin{equation*}
		\diag(\W_{\mathrm{st},V},\W_{\mathrm{c},T})+\F^*\B^*\diag(\W_{\mathrm{st},V},\I_6\otimes \W_{\mathrm{st},T},\W_{\mathrm{st},V})\B\F,  
	\end{equation*}
	which is diagonal due to \Cref{prop:diag}.

	\section{Numerical Experiments}
	\label{sec:num}
	In this section, we evaluate the performance of \Cref{alg:socinpalm} for solving the discretized surface DOT problem \eqref{eq:opt-ddot} using the SOCP reformulation \eqref{eq:opt-ddot-soc-re}.
	All numerical experiments were implemented in Python 3.12 on a desktop PC with an Intel Core i9-9900KF CPU (8 cores, 3.60 GHz) and 64 GB of memory.
	The numerical results for the method proposed in this paper can be reproduced using the software package {\tt DOTs-SOCP}, available at GitHub\footnote{\url{https://github.com/chlhnu/DOTs-SOCP}.}.
	
	We measure the accuracy of a numerical solution $(\bm\varphi,\q,\bm\alpha_1,\bm\alpha_2)$ to \eqref{eq:opt-ddot} and its dual via the KKT system \eqref{eq:ddot-kkt}
	by defining the relative error estimator
	\begin{equation}\label{eq:residue-ot}
		\eta_{\mathrm{dot}} \eqdef \max\{\eta_D,\eta_P,\eta_{\text{proj}},\eta_S\},
	\end{equation}
	where
	\begin{equation*}
		\arraycolsep=1.5pt \def\arraystretch{1.5}
		\begin{array}{lll}
			\eta_D \eqdef \frac{\|\A\bm{\varphi}-\q\|_{q}}{C + \|\A\bm{\varphi}\|_{q}+\|\q\|_{q}},  &\quad  
			\eta_P \eqdef \frac{\| \bm{c} + \W_{\mathrm{c},V}^{-1}\A^*(\W_{\mathrm{st},V}\bm{\alpha}_1;\W_{\mathrm{c},T}\bm{\alpha}_2)\|_{\mathrm{c},V}}{C + \|\bm{c}\|_{\mathrm{c},V}},   \\[2mm]
			\eta_{\text{proj}} \eqdef \frac{\|\bm{\alpha}_1-\max\{0,f(\q)+\bm{\alpha}_1\}\|_{\mathrm{st},V}}{C + \|\bm{\alpha}_1\|_{\mathrm{st},V}+\|f(\q)\|_{\mathrm{st},V}}, &\quad 
			  \mbox{and} 
			\quad 
			\eta_{S} \eqdef \frac{\|\bm{\alpha}_2- h(\bm{\alpha}_1,\bm{B})\|_{\mathrm{c},T}}{C + \|\bm{\alpha}_2\|_{\mathrm{c},T}+\|h(\bm{\alpha}_1,\bm{B})\|_{\mathrm{c},T}}.
		\end{array}
	\end{equation*}
	Here $C>0$ is the mean triangle area, $\q \equiv (\bm{A};\bm{B})$, $f(\q)\eqdef \bm{A}+\frac{1}{2}\mathcal{L}_{t}\mathcal{L}_{s}^*(\vertiii{\bm{B}}^2)$, and $h(\bm{\alpha}_1,\bm{B}) \eqdef \big((\mathcal{L}_{s}\mathcal{L}^*_{t}\bm{\alpha}_1) \otimes \bm{1}_3\big) \odot \bm{B}$.
	The tested algorithms terminate when $\eta_{\rm dot}\le\texttt{Tol}$.
	To improve the performance of \Cref{alg:socinpalm}, we adaptively adjust the penalty parameter $\sigma$  based on the primal-dual KKT residuals of \eqref{eq:ddot-kkt-soc} given by
	\begin{equation*}
		\arraycolsep=1.5pt \def\arraystretch{2.2}
		\begin{array}{cc}
			\eta^{\mathrm{soc}}_P\eqdef \max\left\{\eta_D, \frac{\|\z-\B\mathcal{F}\q-\d\|_{z}}{C + \|\d\|_{z}} \right\}
			\quad\mbox{and}\quad
			\eta^{\mathrm{soc}}_D \eqdef\max \left\{\eta_P,\frac{
				\|(\bm\alpha_1;\bm\alpha_2)-\widetilde{\bm\alpha}\|_{q}
			}{
				C + \|(\bm\alpha_1;\bm\alpha_2)\|_{q} + \|\widetilde{\bm\alpha}\|_{q}
			}\right\}, \\
		\end{array}
	\end{equation*}
	where $\widetilde{\bm\alpha} := - \diag(\W_{\mathrm{st},V},\W_{\mathrm{c},T})^{-1} \F^*\B^*\diag (\W_{\mathrm{st},V},\I_6\otimes \W_{\mathrm{st},T},\W_{\mathrm{st},V})\bm{\beta}$.
	% Dynamically updating the parameter $\sigma$ is equivalent to restarting the algorithm with the current iterate as the new initial input. This strategy has proven highly effective in numerical computations (cf. \cite{tang23,dotsoc,lamxy}).

	\subsection{Test for efficacy via the closed-form solution}
	We first use the following simple example 
	in a flat domain in $\mathbb{R}^2$, for which the ground truth is known, to test the reliability of $\eta_{\mathrm{dot}}$ as an accurate metric for \Cref{alg:socinpalm}. 
	
	\begin{example}
		\label{ex0}
		The Gaussian densities defined on the $x$-$y$ plane
		\begin{equation*}
			\begin{array}{ll}
				\rho_i(x,y) :=
				\mathrm{exp}
				\big(-
				\tfrac{ (x - \nu_i)^2 + (y - \nu_i)^2 }{2 \chi^2}
				\big),
				\quad i=0,1.
			\end{array}
		\end{equation*}
	\end{example}
	The transport between $\rho_0 (x, y)$ and $\rho_1 (x, y)$ in \Cref{ex0} is given by
	$\rho^*(t, x, y) :=
	\mathrm{exp}\big(-\tfrac{(x - c(t))^2 +(y - c(t))^2 }{2 \chi^2}\big)$,
	where $c(t) = (1 - t) \nu_0 + t \nu_1$.
	We set $(\nu_0,\nu_1) := (0.4, 0.6)$ and $\chi := 0.1$,
	and use a triangular mesh of the flat domain $[0,1]^2$ with $9409$ vertices and $18432$ triangles.
	
	We also set four benchmark solvers to compare the numerical performance. 
	The first two are off-the-shelf packages for the original discretized DOT problem \eqref{eq:opt-ddot}, including the ADMM-based software package\footnote{\url{https://github.com/HugoLav/DynamicalOTSurfaces}.}  \cite{hugo18} (denoted by {\tt AD-o}) and the package\footnote{\url{https://github.com/jiajia-yu/FISTA_MFG_mfd}.} based on the proximal gradient method \cite{yjjmanifold} (denoted by {\tt PG}). 
	The remaining two are {\tt Gurobi} (v13.0.2) \cite{gurobi} and {\tt MOSEK} (v11.2.2) \cite{mosek}, applied to the same SOCP reformulation as \Cref{alg:socinpalm}. 
	For both commercial solvers, we keep the default parameters and set the internal termination tolerance as $10^{-6}$.
	While executing \Cref{alg:socinpalm}, we set a few checkpoints based on several KKT residuals and compute $L^{2}$ and $L^{\infty}$ errors between the numerical solution and the ground truth $\rho^*$.
	
	\Cref{tab:versus-exact} presents the $L^{2}$ and $L^{\infty}$ errors for different residuals when executing \Cref{alg:socinpalm}. These algorithms are executed until the time limit ($5000$ seconds) is reached, and the $L^{2}$ and $L^{\infty}$ errors at the final iterations are reported, validating the reliability of $\eta_{\rm dot}$ as a practical measure for accuracy. 
		Meanwhile, both {\tt Gurobi} and {\tt MOSEK} run out of memory on this instance.
		We also noticed that {\tt Gurobi} handles the second-order cone constraints by their equivalent convex quadratic representations, leading to prohibitive memory costs, which is not necessary for further comparison.
		Since the densities of the problems in the surface examples of the next subsection are not strictly positive, we exclude {\tt PG} in subsequent comparisons.
	
	\begin{table}[htbp]
		\footnotesize
		\centering
		\caption{
			Comparison of $L^{2}$-error and $L^{\infty}$-error between exact density $\rho^*(t,x,y)$ and the numerical solutions from \Cref{alg:socinpalm} ({\tt iALM}) under different stopping tolerance (${\tt Tol}= 10^{-3},10^{-4},10^{-5}$), {\tt AD-o}, {\tt PG}, 
			{\tt Gurobi} and {\tt MOSEK}. In the table, ``$\star$'' means termination due to out-of-memory.}
		\begingroup
		\setlength{\tabcolsep}{2.2pt}
		\begin{tabular}{|l|
				c|c|c|c|c|c|c|}
			\hline
			{\bf Algorithm} & {\tt iALM}($10^{-3}$) & {\tt iALM}($10^{-4}$) & {\tt iALM}($10^{-5}$) & {\tt AD-o} & {\tt PG} & {\tt Gurobi} & {\tt MOSEK} \\
			\hline
			$L^2$-error
			& $2.89\mathrm{e}$-2 & $1.17\mathrm{e}$-2 & $5.42\mathrm{e}$-3 & $5.65\mathrm{e}$-3 & $5.56\mathrm{e}$-1
			& \multirow{4}{*}{$\star$} & \multirow{4}{*}{$\star$} \\
			$L^{\infty}$-error
			& $8.29\mathrm{e}$-2 & $3.59\mathrm{e}$-2 & $1.68\mathrm{e}$-3 & $2.04\mathrm{e}$-2 & $8.64\mathrm{e}$-1
			& & \\
			Iterations
			& 297 & 1143 & 5636 & 6177 & 11155
			& & \\
			Time (s)
			& 59.6 & 223.6 & 1161.4 & 5000 & 5000
			& & \\
			\hline
		\end{tabular}
		\endgroup
		\label{tab:versus-exact}
	\end{table}

	\subsection{Numerical results}
	%\label{sec:dot-num-results}
	We present details of our numerical experiments and results to illustrate the performance of \Cref{alg:socinpalm} across various DOT problems on diverse surfaces.
	Besides comparing with the ADMM-based software package ({\tt AD-o}) in \cite{hugo18} applied to the discrete DOT problem \eqref{eq:opt-ddot}, we also compare with ADMM applied to the SOCP reformulation \eqref{eq:opt-ddot-soc-re} (denoted by {\tt AD-s}).
	In {\tt AD-s}, the subproblems are solved by the same implementation as in \Cref{alg:socinpalm}, while the parameters follow \cite{hugo18}.  
	For consistency, all these algorithms are terminated when the KKT residual
	$\eta_{\mathrm{dot}}$ in \eqref{eq:residue-ot} falls below a prescribed tolerance.
	%For {\tt AD-o} and {\tt AD-s}, this criterion replaces their original stopping rules from \cite{hugo18}.
	The commercial software {\tt MOSEK} for conic optimization is also applied to \eqref{eq:opt-ddot-soc-re}, in which we set the internal solver tolerance to $10^{-6}$. 
	After {\tt MOSEK} terminates, we compute and report the corresponding KKT residual and computational time.
	For all the problems, we set the temporal discretization as $N = 31$, the maximum number of iterations as $5\times10^4$, and the maximum computation time as $10$ hours.
	The examples that we tested are listed below and illustrated in \Cref{fig:illustration} with darker shades representing higher density.
	The triangular meshes for these examples are included in our software package.
	
	\begin{figure}[h]
		\centering
		\subfloat[\Cref{ex1}]{\label{fig:rings}
			\includegraphics[width=0.14\textwidth]{images/ring_mu0.png}
			\quad 
			\includegraphics[width=0.14\textwidth]{images/ring_mu1.png}
		}
		\hspace{1cm}
		\subfloat[\Cref{ex2}]{\label{fig:punc}
			\begin{tikzpicture}
				\node (image_node) [anchor = south west, inner sep = 0pt] {\includegraphics[width=0.15\textwidth]{images/refined_punctured_ball_mu0.png}};
				\begin{scope}[
					x = {(image_node.south east)},
					y = {(image_node.north west)},
					font=\footnotesize]
					\node[text = black, align = left, anchor = north west] (removed_text) at (-0.45, 0.8) {\sf Removed};
					\coordinate (arrow_start) at ($(removed_text.east)!0.5!(removed_text.south east)$);
					\coordinate (target_A) at (0.43, 0.76);
					\coordinate (target_B) at (0.20, 0.34);
					\coordinate (target_C) at (0.53, 0.34);
					\tikzset{arrow_style/.style={-latex, black, thin, shorten >=1pt}}
					\draw[arrow_style]  (arrow_start) -- (target_A);
					\draw [arrow_style] (arrow_start) -- (target_B);
					\draw [arrow_style] (arrow_start) -- (target_C);
				\end{scope}
			\end{tikzpicture}
			\includegraphics[width=0.15\textwidth]{images/refined_punctured_ball_mu1.png}
		}
		\\
		\centering
		\subfloat[\Cref{ex3} (i)]{\label{fig:hand}
			\includegraphics[width=0.16\textwidth]{images/hand_mu0.png}
			\includegraphics[width=0.16\textwidth]{images/hand_mu1.png}
		}
		\hspace{1cm}
		\subfloat[\Cref{ex3} (ii)]{\label{fig:hand2}
			\includegraphics[width=0.16\textwidth]{images/refined_hand_mu0.png}
			\includegraphics[width=0.16\textwidth]{images/refined_hand_mu1.png}
		}
		\\
		\centering
		\subfloat[\Cref{ex4}]{\label{fig:bunny}
			\includegraphics[width=0.14\textwidth]{images/bunny_mu0.png}
			\ 
			\includegraphics[width=0.14\textwidth]{images/bunny_mu1.png}
		}
		\hspace{1cm}
		\subfloat[\Cref{ex5}]{\label{fig:airplane}
			\includegraphics[width=0.17\textwidth]{images/airplane_mu0.png}
			\includegraphics[width=0.17\textwidth]{images/airplane_mu1.png}
		}
		\\
		\centering
		\subfloat[\Cref{ex6}]{\label{fig:armadillo}
			\includegraphics[width=0.15\textwidth]{images/armadillo_mu0.png}
			\includegraphics[width=0.15\textwidth]{images/armadillo_mu1.png}
		}
		\hspace{1cm}
		\subfloat[\Cref{ex-hills}]{\label{fig:hills}
			\includegraphics[width=0.16\textwidth]{images/hills_mu0.png}
			\, 
			\includegraphics[width=0.16\textwidth]{images/hills_mu1.png}
		}
		\\
		\centering
		\subfloat[\Cref{ex-knots3}]{\label{fig:knot3}
			\includegraphics[width=0.15\textwidth]{images/knots_3_mu0.png}
			\includegraphics[width=0.15\textwidth]{images/knots_3_mu1.png}
		}
		\hspace{1cm}
		\subfloat[\Cref{ex-knots5}]{\label{fig:knot5}
			\includegraphics[width=0.16\textwidth]{images/knots_5_mu0_camera_front.png}
			\includegraphics[width=0.16\textwidth]{images/knots_5_mu1_camera_front.png}
		}
		\caption{
			Illustration of initial (left) and terminal (right) distributions.}
		\label{fig:illustration}
	\end{figure}
	
	\vspace{-.2\baselineskip}
	\begin{example}
		\label{ex1}
			On a spiral surface, the initial and terminal distributions are shown in \Cref{fig:rings}, discretized with $|V|=9686$ and $|T| = 19288$.
	\end{example}
	
	\vspace{-.7\baselineskip}
	\begin{example}
		\label{ex2}
			On a spherical surface, the initial and terminal distributions are shown in \Cref{fig:punc}.
			In particular, three rectangular regions are removed from the sphere, and the transport path is required to bypass these ``holes''. The surface is discretized with $|V| = 17463$ and $|T|= 34816$.
	\end{example}
	
	\vspace{-.7\baselineskip} 
	\begin{example}
		\label{ex3}
			(i) On the hand surface from \cite{hugo18}, 
			the initial and terminal distributions are shown in \Cref{fig:hand}, with a discretization $|V| = 1515$ and $|T|= 3026$.
			(ii) A similar problem with different distributions, shown in \Cref{fig:hand2}, using a refined discretization with $|V| = 6054$ and $|T| = 12104$.
	\end{example}
	
	\vspace{-.7\baselineskip}
	\begin{example}
		\label{ex4}
			(i) On the bunny surface\footnote{From the Stanford 3D scanning repository (\url{https://graphics.stanford.edu/data/3Dscanrep/}).}, the initial and terminal distributions are shown in \Cref{fig:bunny}. We use a discretization with $|V| = 5047$ and $|T|= 10062$. (ii) A refined discretization with $|V| = 34834$ and $|T| = 69471$.
	\end{example}

	\vspace{-.7\baselineskip}
	\begin{example}\label{ex5}
		On the airplane surface from \cite{hugo18}, the initial and terminal distributions are shown in \Cref{fig:airplane}. We use a discretization with $|V| = 3772$ and $|T| = 7540$, as well as a refined discretization with $|V| = 15082$ and $|T| = 30160$. In particular, the meshes are not evenly distributed.
	\end{example} 
	
	\vspace{-.7\baselineskip}
	\begin{example}
		\label{ex6}
		The transportation problem on the armadillo surface from  \cite{hugo18}. The initial and terminal distributions are shown in \Cref{fig:armadillo}. We use a discretization with $|V| = 5002$ and $|T|=10000$, as well as a refined discretization with $|V| = 20002$ and $|T|=40000$.
	\end{example}
	
	\vspace{-0.7\baselineskip}
	\begin{example}
		\label{ex-hills}
		On a hill surface, the initial and terminal distributions are shown in \Cref{fig:hills}, discretized with $|V| = 10000$ and $|T|=19602$.
	\end{example}

	\vspace{-.7\baselineskip}
	\begin{example}
		\label{ex-knots3}
			On the surface of a trefoil knot, the initial and terminal distributions are shown in \Cref{fig:knot3}, discretized with $|V| = 4096$ and $|T|= 8192$.
	\end{example}
	
	\vspace{-.7\baselineskip}
	\begin{example}
		\label{ex-knots5}
			On the surface of a cinquefoil knot, the initial and terminal distributions are shown in \Cref{fig:knot5}, discretized with $|V| = 4096$ and $|T|= 8192$.
	\end{example}

		\Cref{tab:res} reports the numerical results, in which both $\texttt{Tol} = 10^{-4}$ and $\texttt{Tol} = 10^{-5}$ are used for  
		{\tt AD-o}, {\tt AD-s}, and {\tt iALM}, while {\tt MOSEK} sets the inner tolerance to $10^{-6}$. 
		For $\texttt{Tol} = 10^{-4}$, {\tt AD-o} uses on average about $2.6$ times as many iterations and about $7$ times as much total computational time as \Cref{alg:socinpalm}.
		While {\tt AD-s} improves over {\tt AD-o} by benefiting from the efficient implementation of the SOCP subproblems, it still requires more iterations than \Cref{alg:socinpalm}, leading to an average total computational time about $4.5$ times that of \Cref{alg:socinpalm}.

	\begin{table}[htbp]
		\footnotesize
		\caption{
			Comparison between {\tt AD-o}, {\tt AD-s}, {\tt MOSEK}, and \Cref{alg:socinpalm} ({\tt iALM}). 
			Here, ``{\bf--}'' denotes reaching the time limit of $10$ hours, and ``$\star$'' means termination due to out-of-memory.}
		\label{tab:res}
		\begin{center}
			\begingroup
			\setlength{\tabcolsep}{2.2pt}
			\begin{tabular}{
					|l|
					c
					c
					c
					c
					|
					c
					c
					c
					|
					c@{\hspace{0.1em}}
					c@{\hspace{0.1em}}
					c@{\hspace{0.1em}}
					c|}
				\hline
				Example &
				\multicolumn{4}{c|}{$\eta_{\text{dot}}$} &
				\multicolumn{3}{c|}{Iterations} &
				\multicolumn{4}{c|}{Time (s)} \\
				&
				{\tt AD-o} & {\tt AD-s} & {\tt MOSEK} & {\tt iALM} &
				{\tt AD-o} & {\tt AD-s} & {\tt iALM} &
				{\tt AD-o} & {\tt AD-s} & {\tt MOSEK} & {\tt iALM} \\
				\hline
				
				\ref{ex1}
				& $9.98\mathrm{e}$-5 & $1.00\mathrm{e}$-4 & \multirow{2}{*}{$\star$} & $9.20\mathrm{e}$-5
				& 1472 & 2461 & 653
				& 799.7 & 524.8 & \multirow{2}{*}{$\star$} & \textbf{130.9} \\
				& $1.00\mathrm{e}$-5 & $1.00\mathrm{e}$-5 & & $1.00\mathrm{e}$-5
				& 12640 & 17391 & 2730
				& 6811.1 & 4033.6 & & \textbf{555.3} \\
				\hline
				
				\ref{ex2}
				& $9.94\mathrm{e}$-5 & $1.00\mathrm{e}$-4 & \multirow{2}{*}{$\star$} & $9.66\mathrm{e}$-5
				& 3734 & 5291 & 973
				& 4085.0 & 2511.8 & \multirow{2}{*}{$\star$} & \textbf{416.2} \\
				& $1.00\mathrm{e}$-5 & $1.00\mathrm{e}$-5 & & $1.00\mathrm{e}$-5
				& 18862 & 27208 & 5814
				& 20904.1 & 13437.2 & & \textbf{2473.6} \\
				\hline
				
				\ref{ex3}(i)
				& $9.95\mathrm{e}$-5 & $1.00\mathrm{e}$-4 & \multirow{2}{*}{$4.06\mathrm{e}$-5} & $9.68\mathrm{e}$-5
				& 4615 & 4151 & 1159
				& 357.2 & 132.5 & \multirow{2}{*}{415.7} & \textbf{37.2} \\
				& $1.00\mathrm{e}$-5 & $1.00\mathrm{e}$-5 & & $1.00\mathrm{e}$-5
				& 18031 & 16548 & 5443
				& 1393.5 & 547.7 & & \textbf{175.6} \\
				\hline
				
				\ref{ex3}(ii)
				& $9.97\mathrm{e}$-5 & $1.00\mathrm{e}$-4 & \multirow{2}{*}{$\star$} & $9.84\mathrm{e}$-5
				& 4787 & 6698 & 2043
				& 1600.3 & 794.1 & \multirow{2}{*}{$\star$} & \textbf{237.2} \\
				& $1.00\mathrm{e}$-5 & $1.00\mathrm{e}$-5 & & $1.00\mathrm{e}$-5
				& 40616 & 41746 & 11550
				& 13393.4 & 5256.1 & & \textbf{1333.2} \\
				\hline
				
				\ref{ex4}(i)
				& $9.97\mathrm{e}$-5 & $1.00\mathrm{e}$-4 & \multirow{2}{*}{$\star$} & $9.89\mathrm{e}$-5
				& 3908 & 6018 & 1617
				& 1155.4 & 606.5 & \multirow{2}{*}{$\star$} & \textbf{157.1} \\
				& $1.00\mathrm{e}$-5 & $1.00\mathrm{e}$-5 & & $1.00\mathrm{e}$-5
				& 20200 & 26112 & 7642
				& 5963.8 & 2771.1 & & \textbf{740.0} \\
				\hline
				
				\ref{ex4}(ii)
				& $9.94\mathrm{e}$-5 & $1.00\mathrm{e}$-4 & \multirow{2}{*}{$\star$} & $9.88\mathrm{e}$-5
				& 6783 & 14191 & 2618
				& 14963.3 & 14591.7 & \multirow{2}{*}{$\star$} & \textbf{2423.5} \\
				& $3.36\mathrm{e}$-5 & $2.63\mathrm{e}$-5 & & $1.00\mathrm{e}$-5
				& 15606 & 31573 & 11697
				& {\bf--} & {\bf--} & & \textbf{10944.1} \\
				\hline
				
				\ref{ex5}(i)
				& $9.97\mathrm{e}$-5 & $1.00\mathrm{e}$-4 & \multirow{2}{*}{$3.32\mathrm{e}$-5} & $9.85\mathrm{e}$-5
				& 5607 & 7972 & 1718
				& 1084.8 & 715.6 & \multirow{2}{*}{2200.5} & \textbf{126.5} \\
				& $1.00\mathrm{e}$-5 & $1.00\mathrm{e}$-5 & & $1.00\mathrm{e}$-5
				& 40469 & 31105 & 10266
				& 7873.1 & 2711.8 & & \textbf{742.6} \\
				\hline
				
				\ref{ex5}(ii)
				& $9.97\mathrm{e}$-5 & $1.00\mathrm{e}$-4 & \multirow{2}{*}{$\star$} & $9.98\mathrm{e}$-5
				& 5950 & 15746 & 2707
				& 5295.0 & 6198.7 & \multirow{2}{*}{$\star$} & \textbf{984.0} \\
				& $1.07\mathrm{e}$-5 & $1.00\mathrm{e}$-5 & & $1.00\mathrm{e}$-5
				& 39747 & 67751 & 15739
				& {\bf--} & 26534.3 & & \textbf{5608.3} \\
				\hline
				
				\ref{ex6}(i)
				& $9.97\mathrm{e}$-5 & $1.00\mathrm{e}$-4 & \multirow{2}{*}{$4.33\mathrm{e}$-5} & $9.98\mathrm{e}$-5
				& 4201 & 8898 & 1687
				& 1214.8 & 924.5 & \multirow{2}{*}{2963.4} & \textbf{164.6} \\
				& $1.00\mathrm{e}$-5 & $1.00\mathrm{e}$-5 & & $1.00\mathrm{e}$-5
				& 31187 & 39813 & 8477
				& 8819.8 & 4689.9 & & \textbf{822.6} \\
				\hline
				
				\ref{ex6}(ii)
				& $1.00\mathrm{e}$-4 & $1.00\mathrm{e}$-4 & \multirow{2}{*}{$\star$} & $9.86\mathrm{e}$-5
				& 7175 & 16147 & 2910
				& 8804.9 & 8975.0 & \multirow{2}{*}{$\star$} & \textbf{1456.8} \\
				& $1.67\mathrm{e}$-5 & $1.38\mathrm{e}$-5 & & $1.00\mathrm{e}$-5
				& 27908 & 58888 & 13847
				& {\bf--} & {\bf--} & & \textbf{6958.4} \\
				\hline
				
				\ref{ex-hills}
				& $9.98\mathrm{e}$-5 & $1.00\mathrm{e}$-4 & \multirow{2}{*}{$\star$} & $9.85\mathrm{e}$-5
				& 4075 & 6296 & 2055
				& 2354.9 & 1419.2 & \multirow{2}{*}{$\star$} & \textbf{419.9} \\
				& $1.00\mathrm{e}$-5 & $1.00\mathrm{e}$-5 & & $1.00\mathrm{e}$-5
				& 29690 & 36181 & 10852
				& 17117.5 & 8436.8 & & \textbf{2259.4} \\
				\hline
				
				\ref{ex-knots3}
				& $9.99\mathrm{e}$-5 & $1.00\mathrm{e}$-4 & \multirow{2}{*}{$1.89\mathrm{e}$-5} & $9.88\mathrm{e}$-5
				& 6819 & 7516 & 3152
				& 1527.9 & 618.3 & \multirow{2}{*}{3173.4} & \textbf{254.6} \\
				& $1.00\mathrm{e}$-5 & $1.00\mathrm{e}$-5 & & $1.00\mathrm{e}$-5
				& 37733 & 34621 & 15136
				& 8387.1 & 2956.5 & & \textbf{1225.2} \\
				\hline
				
				\ref{ex-knots5}
				& $9.97\mathrm{e}$-5 & $1.00\mathrm{e}$-4 & \multirow{2}{*}{$4.09\mathrm{e}$-5} & $9.88\mathrm{e}$-5
				& 7224 & 11893 & 4222
				& 1622.4 & 1000.6 & \multirow{2}{*}{3321.0} & \textbf{340.2} \\
				& $1.00\mathrm{e}$-5 & $1.00\mathrm{e}$-5 & & $1.00\mathrm{e}$-5
				& 35000 & 44819 & 15827
				& 7878.2 & 3904.5 & & \textbf{1282.4} \\
				\hline
			\end{tabular}
			\endgroup
		\end{center}
	\end{table}

	For the more stringent tolerance $\texttt{Tol} = 10^{-5}$, {\tt AD-o} fails to reach the prescribed accuracy within $10$ hours on several larger instances.
		Compared with \Cref{alg:socinpalm}, {\tt AD-o} uses, on average, about $8.9$ times as much total computational time on the instances it solves within $10$ hours.
		For the SOCP reformulation, {\tt AD-s} reaches $\texttt{Tol} = 10^{-5}$ on one more instance than {\tt AD-o}, namely \Cref{ex5} with $|V| = 15082$, whereas \Cref{alg:socinpalm} remains about $4.3$ times faster on average among the instances solved by {\tt AD-s} within the time limit.
		\Cref{tab:res} also shows that {\tt MOSEK} is competitive on several smaller meshes, but it terminates due to insufficient memory on the larger instances. In all instances for which {\tt MOSEK} returns a solution, \Cref{alg:socinpalm} reaches the prescribed tolerance with substantially less computational time.
		
		The computed displacement interpolations by \Cref{alg:socinpalm} with $\texttt{Tol} = 10^{-5}$ for \Cref{ex2,ex-hills} are shown in \Cref{fig:snapshot-punc,fig:snapshot-hills}.
		In \Cref{fig:snapshot-punc}, the mass moves from the lower part to the upper part while bypassing the removed regions.
		Additional snapshots of the interpolations for other examples, obtained via \Cref{alg:socinpalm}, are provided in Section 1 of the supplementary document\footnote{\url{https://github.com/chlhnu/DOTs-SOCP/releases/download/v1.0/Document.pdf}.}.

	\begin{figure}[htbp]
		\centering
		\includegraphics[width=0.16\linewidth]{images/refined_punctured_ball_mu0.png}
		\ 
		\includegraphics[width=0.16\linewidth]{images/refined_punctured_ball_socp_4.png}
		\ 
		\includegraphics[width=0.16\linewidth]{images/refined_punctured_ball_socp_8.png}
		\ 
		\includegraphics[width=0.16\linewidth]{images/refined_punctured_ball_socp_11.png}
		\ 
		\includegraphics[width=0.16\linewidth]{images/refined_punctured_ball_socp_15.png}
		\\[1mm]
		\hspace{0.001\linewidth}
		\centering
		\includegraphics[width=0.16\linewidth]{images/refined_punctured_ball_socp_18.png}
		\ 
		\includegraphics[width=0.16\linewidth]{images/refined_punctured_ball_socp_22.png}
		\ 
		\includegraphics[width=0.16\linewidth]{images/refined_punctured_ball_socp_25.png}
		\ 
		\includegraphics[width=0.16\linewidth]{images/refined_punctured_ball_socp_29.png}
		\ 
		\includegraphics[width=0.16\linewidth]{images/refined_punctured_ball_mu1.png}
		\caption{Computed displacement interpolation for \Cref{ex2}.}
		\label{fig:snapshot-punc}
	\end{figure}

	\begin{figure}[htbp]
		\centering
		\includegraphics[width=0.17\linewidth]{images/hills_mu0.png}
		\includegraphics[width=0.17\linewidth]{images/hills_socp_4.png}
		\includegraphics[width=0.17\linewidth]{images/hills_socp_8.png}
		\includegraphics[width=0.17\linewidth]{images/hills_socp_11.png}
		\includegraphics[width=0.17\linewidth]{images/hills_socp_15.png}
		\\
		\includegraphics[width=0.17\linewidth]{images/hills_socp_18.png}
		\includegraphics[width=0.17\linewidth]{images/hills_socp_22.png}
		\includegraphics[width=0.17\linewidth]{images/hills_socp_25.png}
		\includegraphics[width=0.17\linewidth]{images/hills_socp_29.png}
		\includegraphics[width=0.17\linewidth]{images/hills_mu1.png}
		\caption{Computed displacement interpolation for \Cref{ex-hills}. }
		\label{fig:snapshot-hills}
	\end{figure}

		In conclusion, the numerical results show that the efficiency of \Cref{alg:socinpalm} comes from the economical SOCP implementation, the inexact proximal ALM framework of \cite{cl21} (which utilizes a larger dual step length $\tau$ with proven practical benefits), and an adaptive adjustment of the penalty parameter.

	\subsection{A practical approach for congestion phenomenon}
	The numerical results in \Cref{tab:res} show that the performance of \Cref{alg:socinpalm} deteriorates slightly on \Cref{ex-hills,ex-knots3,ex-knots5}. These instances require a considerable number of iterations to achieve the prescribed tolerance, despite having moderate mesh sizes, with $|V|\leq 10^4$.
		As shown in \Cref{fig:snapshot-hills} for \Cref{ex-hills}, the mass is concentrated along a few narrow ``valley'' regions during the evolution.
		A similar concentration phenomenon is also observed for \Cref{ex-knots3,ex-knots5}, whose snapshots of interpolations are provided in Sections 1.9 and 1.10 of the supplementary document.
		These localized high-density regions create a congestion effect, increasing the difficulty of solving the corresponding optimization problems.
	For such congestion-prone examples, one can follow the discussion in \cite[Section 5.4]{hugo18} to introduce an extra perturbation variable $\bm{\varrho}\in \mathbb{R}^{\fvs}$ and penalize the perturbation, obtaining the modification of the discretized problem \eqref{eq:opt-ddot} as
	\begin{equation}
		\label{eq:opt-ddot-cong}
		\min_{\bm{\varphi},\q,\bm{\varrho}}
		\left\{\langle \bm{\varphi},\bm{c}\rangle_{\mathrm{c},V} + \tfrac{1}{2\theta} \|\bm{\varrho}\|_{\mathrm{st},V}^2
		\ \Big\vert\
		\begin{array}{ll}
			\mathcal{A}\bm{\varphi} -  (\bm{\varrho}; \bm{0}_{3|\ftc|})  = \q\equiv (\bm{A};\bm{B}),
			\\[1pt]
			\bm{A} + \frac{1}{2} \L_{t} \L^*_{s} (\vertiii{\bm{B}}^2)\le 0
		\end{array}
		\right\},
	\end{equation}
	where $\theta>0$ controls the strength of the congestion phenomenon. A larger value of $\theta$ imposes a stronger congestion penalty in \eqref{eq:opt-ddot-cong} and typically leads to a less concentrated transport path.
	We can still utilize the SOCP reformulation proposed in this paper, 
	along with a marginally modified \Cref{alg:socinpalm}, to solve \eqref{eq:opt-ddot-cong}.
	
	\Cref{tab:cong} reports the numerical results of the inexact proximal ALM
	(implemented similarly to \Cref{alg:socinpalm}) in solving \eqref{eq:opt-ddot-cong} for \Cref{ex-hills,ex-knots3,ex-knots5} under different values of $\theta$  with $\texttt{Tol}=10^{-4}$. 
		According to \Cref{tab:cong}, the proposed framework successfully solved all tested values of $\theta$. 
		In particular, $\theta = 0.01$ is sufficient to substantially reduce the computational difficulty while producing a density evolution close to that of the standard DOT model, as shown in Section 2 of the supplementary document.
		This suggests that adding a small positive $\theta$ constitutes a practical approach that preserves the accuracy of the interpolation while improving computational efficiency.
	\begin{table}[htbp]
		\footnotesize
		\centering
		\caption{Performance of adapting \Cref{alg:socinpalm} for solving \eqref{eq:opt-ddot-cong} under different congestion parameters.}
		\begingroup
		\setlength{\tabcolsep}{2.2pt}
		\begin{tabular}{|l|c|c|c|}
			\hline
			{\bf Example} & $\theta$ & Iterations & Time (s) \\
			\hline
			\ref{ex-hills}
			& $0 \mid 0.01 \mid 0.05$
			& $2055 \mid 364 \mid 298$
			& $419.9 \mid 77.2 \mid 64.4$ \\
			\hline
			\ref{ex-knots3}
			& $0 \mid 0.01 \mid 0.05$
			& $3152 \mid 661 \mid 277$
			& $254.6 \mid 54.8 \mid 22.0$ \\
			\hline
			\ref{ex-knots5}
			& $0 \mid 0.01 \mid 0.05$
			& $4222 \mid 1571 \mid 739$
			& $340.2 \mid 127.1 \mid 60.9$ \\
			\hline
		\end{tabular}
		\endgroup
		\label{tab:cong}
	\end{table}

	\section{Conclusions}
	\label{sec:con}
	In this paper, we proposed an efficient numerical optimization framework for solving DOT problems on two-dimensional Riemannian manifolds.
	Building upon the convex DOT model of Benamou-Brenier-Lisini, we
	introduced a novel reformulation of the discretized dual DOT problem into a linear SOCP problem, which yields a tractable model by effectively decoupling variables within the constraints. 
	To solve the resulting SOCP, we developed an inexact proximal ALM that admits a highly efficient numerical implementation and is theoretically guaranteed to converge to a KKT solution without imposing any additional assumptions.
	Extensive numerical experiments validated the effectiveness, robustness, and superior computational efficiency of our method, demonstrating significant computational advantages over state-of-the-art surface DOT solvers and commercial software {\tt Gurobi} and {\tt MOSEK} for conic optimization. 
	Furthermore, the proposed framework has been released as an open-source software package available on GitHub.
	Beyond the current scope, this approach holds promising potential for addressing a broader class of optimization problems in Wasserstein spaces, such as Wasserstein gradient flows and mean-field games, thereby opening interesting avenues for future investigation.

	\appendix
	\section{Discrete Riemannian gradients of hat functions}
	\label{sec:gradient}
	Given the triangle $\f$ with three vertices $\v_1^{\f}$, $\v_2^{\f}$, and $\v_3^{\f}$, define the affine function
	\begin{equation*}
		G_{\f}(\varepsilon_1,\varepsilon_2):= \v_1^{\f} 
		+\varepsilon_1(\v_2^{\f}-\v_1^{\f})
		+\varepsilon_2(\v_3^{\f}-\v_1^{\f}),
		\quad
		0\leq \varepsilon_1,\varepsilon_2,\quad \varepsilon_1+\varepsilon_2\leq 1.
	\end{equation*}
	Since the hat functions $h_{\v_i^{\f}},$ $i=1,2,3$ are linear on $\f$, it follows that
	\begin{equation*}
		h_{\v_i^{\f}}\circ G_{\f}(\varepsilon_1,\varepsilon_2) =
		h_{\v_i^{\f}}(\v_1^{\f})+
		\varepsilon_1(h_{\v_i^{\f}}(\v_2^{\f})-h_{\v_i^{\f}}(\v_1^{\f}))+\varepsilon_2(h_{\v_i^{\f}}(\v_3^{\f})-h_{\v_i^{\f}}(\v_1^{\f})).
	\end{equation*}
	Specifically, one has $h_{\v_1^{\f}}\circ G_{\f}(\varepsilon_1,\varepsilon_2) = 1-\varepsilon_1-\varepsilon_2$,
	so that its gradient is given by $\nabla_{\bm\varepsilon}\big(h_{\v_1^{\f}}\circ G_{\f}\big)=(-1;-1)$.
	Note that the chain rule guarantees that
	\begin{equation}
		\label{gradeq}
		\nabla_{\bm\varepsilon}\big(h_{\v_1^{\f}}\circ G_{\f}\big) = J_{\f}^{\top}\nabla h_{\v_1^{\f}},
	\end{equation}
	since $J_{\f} = (\v^{\f}_2-\v^{\f}_1,\v^{\f}_3-\v^{\f}_1)$ is the Jacobian matrix of $G_{\f}$.
	Since the basis function $h_{\v_1^{\f}}$ is linear on $\f$, its gradient $\nabla h_{\v_1^{\f}}$ lies in the plane spanned by $\v^{\f}_2-\v^{\f}_1 $ and $\v^{\f}_3-\v^{\f}_1$, namely in $\rge(J_{\f})$ (see \cite[Section 3.1]{yjjmanifold} for details).
	Therefore, we know from \eqref{gradeq} that $\nabla h_{\v_1^{\f}} = (J_{\f}^{\top})^\dag(-1;-1)$. Similar discussions on $h_{\v_2^{\f}}$ and $h_{\v_3^{\f}}$ ensure \eqref{eq:nabla-h}. 
	
	\small
	\bibliographystyle{plain}
	\bibliography{ref.bib}

\end{document}